\errorcontextlines10
\documentclass[reqno,makeidx,12pt]{amsart}
        
\usepackage{ifpdf}
\ifpdf
        \pdfoutput=1 
        \pdfcompresslevel=9     
        \pdftrue 
        \fi

\usepackage{graphicx}
\usepackage{latexsym}
\usepackage{amstext}
\usepackage {amsmath}
\usepackage {amsfonts}
\usepackage {amssymb}
\usepackage {amsthm}
\usepackage {bbm}
\usepackage{enumerate}
\usepackage{array}
\usepackage{comment}
\usepackage{xspace}
\usepackage[bookmarks=true]{hyperref}
\usepackage{enumerate}
\oddsidemargin 6pt\evensidemargin 6pt\marginparwidth 48pt\marginparsep 10pt 
\topmargin -18pt\headheight 12pt\headsep 25pt
\ifx\cs\documentclass \footheight 12pt \fi \footskip 30pt 
\textheight 625pt\textwidth 431pt\columnsep 10pt\columnseprule 0pt 

\sloppy

\DeclareMathOperator{\spt}{supp}

\DeclareMathOperator{\dive}{div}
\def\loc{{\mathrm{loc}}}

\newcommand{\eps}{\varepsilon}

\newcommand{\R}{\ensuremath{\mathbb{R}}}
\newcommand{\Rn}{\ensuremath{{\mathbb{R}^n}}}

\newcommand{\N}{\ensuremath{\mathbb{N}}}
\newcommand{\LL}{\ensuremath{\mathcal{L}}}

\newcommand{\Ha}{\ensuremath{\mathcal{H}}}

%
%
\def\R{\mathbb R}

\def\W{\mathcal{W}}
\def\A{\mathcal{S}}
\def\F{\mathcal{F}}

\def\Per{\mathcal{P}}

\theoremstyle{plain}
\numberwithin{equation}{section}
\newtheorem{lemma}{Lemma}[section]
\newtheorem{theorem}[lemma]{Theorem}
\newtheorem{proposition}[lemma]{Proposition}
\newtheorem{definition}[lemma]{Definition}

\theoremstyle{definition}
\newtheorem{remark}[lemma]{Remark}
\newtheorem{notation}[lemma]{Notation}
%
%
\begin{document}
\title[Convergence of perturbed Allen--Cahn equations]{Convergence of
  perturbed Allen--Cahn equations to forced mean curvature flow}
\author{Luca Mugnai}
\address{Luca Mugnai, Max Planck Institute for Mathematics in the
  Sciences, Inselstr. 22, D-04103 Leipzig}

\author{Matthias R{\"o}ger}
\address{Matthias R\"{o}ger, Max Planck Institute for Mathematics in the
  Sciences, Inselstr. 22, D-04103 Leipzig}

\email{mugnai@mis.mpg.de, roeger@mis.mpg.de}

\subjclass[2000]{Primary  53C44; Secondary  35K55,  49Q15}

\keywords{Allen-Cahn equation, sharp interface limits, motion by mean curvature}

\date{\today}

\begin{abstract}
We study perturbations of the Allen--Cahn equation and prove the
convergence to forced mean curvature flow in the sharp
interface limit. We allow for perturbations that are
square-integrable with respect to the diffuse surface area measure. We
give a suitable generalized formulation for forced 
mean curvature flow and apply previous results for the Allen--Cahn action
functional. Finally we discuss some applications.
\end{abstract}

\maketitle
\section{Introduction}
\label{sec:intro}
In this paper we study perturbed Allen--Cahn equations of the form 
\begin{align}
  \eps \partial_t u_\eps \,&=\, \eps\Delta u_\eps -\frac{1}{\eps}W'(u_\eps) +
  g_\eps\quad\text{ in }\Omega_T, 
  \label{eq:p-AC}
  \\
  u_\eps(0,\cdot)\,&=\, u_\eps^0\quad\text{ in }\Omega, 
  \label{eq:init}
  \\
  \nabla u_\eps\cdot\nu_\Omega\,&=\, 0\quad\text{ on }(0,T)\times
  \partial\Omega,
  \label{eq:bdry}
\end{align} 
where the spatial domain $\Omega$ is given by an open 
bounded 
set in
$\R^n$ with Lipschitz boundary, $(0,T)$ is a fixed time intervall,
$\Omega_T:=(0,T)\times\Omega$, and
$W$ is the standard quartic double-well potential
\begin{gather*}
  W(r)\,=\,\frac{1}{4}(1-r^2)^2.
\end{gather*}
We are interested in
the asymptotics of {\eqref{eq:p-AC}} in the \emph{sharp interface limit}
$\eps\to 0$ for forcing terms $g_\eps$ that satisfy
\begin{gather}
  \sup_{\eps>0}\int_0^T\int_\Omega\frac{1}{\eps}g_\eps(t,x)^2\,dx
  dt\,=:\,\Lambda\,<\,\infty.
   \label{eq:ass-g}
\end{gather}

Perturbations of this type arise for example in models for
diffusion-induced grain boundary motions \cite{CaFP97}, in models for
phase transitions \cite{BeMi98}, \cite{Wa93},
\cite{Sone95}, and in image processing \cite{BeCM04}.

If $g_\eps=0$ then \eqref{eq:p-AC} reduces to the standard Allen--Cahn
equation. It is well known that in this case the sharp
interface limit is given by the evolution of phase boundaries by mean
curvature flow \cite{BaSS93,EvSS92,Ilma93}. Our goal is to prove that
solutions of the 
perturbed equation \eqref{eq:p-AC} converge to motion by \emph{forced mean
  curvature flow}, 
\begin{gather}
  v\,=\, H + g. \label{eq:fmc}
\end{gather}
Here $v$ describes the velocity vector of an evolution of phase
boundaries $(\Gamma_t)_{t\in
(0,T)}$, $H(t,\cdot)$ denotes the mean curvature vector of $\Gamma_t$, and $g$
is an appropriate limit of $g_\eps \nabla u_\eps$.

Since the limit evolution in general allows for the formation of singularities
in finite time it is necessary to consider suitable generalized
formulations of \eqref{eq:fmc}. In the analysis of mean curvature flow
different techniques have been 
sucessfully applied, in particular viscosity solutions
\cite{BaSS93,ChGG91,CrIL92,EvSS92,EvSp91}, De 
Giorgi's barriers method \cite{BeNo97,BePa95,ChNo08,DiLN01}, and
geometric measure theory formulations. We follow
here the latter approach and use in particular many ideas
from the work of Brakke 
\cite{Brak78} and Ilmanen \cite{Ilma93} on mean curvature flow and the
convergence of the Allen--Cahn equation, respectively. To avoid problems with
cancellations of phase boundaries we consider not only the evolution of
the phases but also the evolution of certain \emph{energy measures}. In the case
of a smooth limit evolution and `nicely behaving' 
approximations these measures coincide with the surface area
measures associated with the phase boundaries, but in 
general they may be supported on additional \emph{hidden boundaries} or
may carry a higher mulitplicity. Generalizing hypersurfaces in this way
is in the spirit of the theory of (integral) varifolds, which allows to give a meaning to
geometric quantities such as mean curvature and second
fundamental form, and which provides good compactness properties. In the
context of phase transition problems this technique has been successfully applied
to a couple of different problems
\cite{Chen96,Sone95,HuTo00,Tone05,RSc06,MuR08,RTo08}.  

Our main result is the convergence of solutions to 
\eqref{eq:p-AC} to an \emph{$L^2$-flow} of energy measures that move by
forced mean curvature flow.  
The concept of $L^2$-flows
was develloped in \cite{MuR08} and describes an
evolution of integral varifolds with square integrable weak mean curvature
and square integrable generalized velocity. We verify the evolution law
\eqref{eq:fmc} in a pointwise formulation almost everywhere
with respect to the energy measures. For a precise formulation of our main
result see Section \ref{sec:main}.

One benefit of our approach is that we do not use a comparison
principle neither for the perturbed Allen--Cahn equation nor for the
forced mean curvature flow. This makes our technique quite flexible 
compared to viscosity solution approaches or to the use of maximum
principles to prove the non-positivity of the discrepancy measures, as
pursued 
in \cite{Ilma93}. Compared to previous results on forced mean curvature
flow \cite{BaSS93,BaSo98,ChNo08} and on the 
convergence of perturbed Allen--Cahn equations our results are more general
in the regularity that is required for the forcing term. We do only need
that the forcing term is (uniformly) $L^2$-integrable with respect to the
(diffuse) surface energy measures.  
On the other hand our proof is limited to space dimensions $n=2,3$ and our formulation
of the limit equation is weaker.

This paper borrows many ideas from our analysis of the Allen--Cahn
\emph{action functional} \cite{MuR08}, which is defined for any 
smooth function $u:\Omega_T\to\R$ by
\begin{gather}
 \A_\eps(u)\,:=\, \int_0^T\int_\Omega \Big(\sqrt{\eps}\partial_t u
  +\frac{1}{\sqrt{\eps}}\big(-\eps\Delta u 
  +\frac{1}{\eps}W^\prime(u)\big)\Big)^2\,dx\,dt. \label{def:action}
\end{gather}
The functional $\A_\eps$ is connected to the small noise limit of the probability
of rare events in the stochastically perturbed Allen--Cahn equation
\cite{KORV07}.  
The assumption \eqref{eq:ass-g} on $g_\eps$ yields a uniform bound on
the action for solutions $(u_\eps)_{\eps>0}$ of \eqref{eq:p-AC}. By
\cite{MuR08} this implies the convergence of diffuse surface area
measures associated to $u_\eps$ to an $L^2$-flow in the limit $\eps\to
0$. In this paper we discuss the convergence of the evolution laws
and present some applications. In particular, we 
prove the convergence of diffuse approximations to the Mullins--Sekerka
problem with kinetic undercooling in dimensions $n=2,3$, which improves
earlier results by Soner \cite{Sone95}. 

An important ingredient to derive the compactness of action-bounded
sequences and solutions of \eqref{eq:p-AC} is stated in a
(modified) conjecture of De Giorgi \cite{DeG91}: Considering 
\begin{gather}
E_\eps(u):=\int_\Omega\left(\frac{\eps}{2}\vert\nabla u\vert^2+\frac{W(u)}{\eps}\right)\, dx,
\\
 \W_\eps(u)\,:=\,\int_\Omega \frac{1}{\eps}\Big(-
  \eps\Delta u +\frac{1}{\eps}W'(u)\Big)^2\,dx \label{eq:def-W-intro}
\end{gather}
the sum $E_\eps+\W_\eps$ Gamma-converges, up to a constant factor $c_0$, 
to the sum of the Perimeter functional $\Per$ and the \emph{Willmore functional} $\W$,
\begin{gather}
  E_\eps + \W_\eps\,\to\, c_0\Per + c_0\W,\qquad
  \W(u)\,=\, \int_{\Gamma} H^2\,d\Ha^{n-1}, \label{eq:def-willmore}
\end{gather}
where $\Gamma$ denotes the phase boundary $\partial^*\{u=1\}\cap\Omega$
and where
\begin{gather}\label{eq:def-c0}
  c_0 \,:=\, \, \int_{-1}^1 \sqrt{2W(s)}\,ds.
\end{gather}
This statement was proved in space dimensions $n=2,3$ by R\"oger and
Sch\"atzle \cite{RSc06} and provides a diffuse version of Allard's
compactness theorem for integral 
varifolds (in the special case of a uniform $L^2$ bound on the mean
curvature and $n=2,3$). In particular we avoid the use of a diffuse
version of Huisken's monotonicity formula \cite{Huis90} to derive the rectifiability of
the limiting energy measures, as it was done in \cite{Ilma93}.
\section{$L^2$-flows and diffuse surface area measures}
\label{sec:l2flows}
In this section we state our weak formulation for
evolutions of mean curvature flow type. For basic notions from
geometric measure theory we refer to \cite{Alla72,Simo83}.
\begin{notation}
A (general) varifold on $\Omega$ is a Radon measure on the Grassmannian
$G^{n-1}(\Omega)$, i.e. the euclidean product of $\Omega$ with the space of unoriented
$(n-1)$ planes in $\Rn$.
A Radon measure $\mu$ on $\Omega$ is $(n-1)$-integer rectifiable if in
$\mu$-almost all points $x\in\Omega$ the $(n-1)$-dimensional (measure
theoretical) tangent plane $T_x\mu$ exists and if $\mu$-almost
everywhere the $(n-1)$-dimensional density $\theta^{n-1}(\mu,\cdot)$ is
integer-valued. A varifold 
$V$ on $\Omega$ is $(n-1)$-integer rectifiable if there exists an
$(n-1)$-integer rectifiable Radon measure $\mu$ on $\Omega$ such that
\begin{gather*}
  \int_{G^{n-1}(\Omega)} \zeta(x,S)\,dV(x,S)\,=\, \int_\Omega
  \zeta(x,T_x\mu)\,d\mu(x) 
\end{gather*}
for all $\zeta\in C^0_c(G^{n-1}(\Omega))$. This gives a
one-to-one correspondence between $(n-1)$-integer 
rectifiable varifolds and $(n-1)$-integer
rectifiable Radon measure on $\Omega$. In this paper we will identify
the corresponding objects and use the term \emph{integral varifold}.

The first variation $\delta\mu$ of an integral varifold $\mu$ in direction of a
vector field $\eta\in C^1_c(\Omega,\Rn)$ is defined by
\begin{gather*}
  \delta\mu(\eta)\,:=\, \int_\Omega \dive_{T_x\mu}\eta(x)\,d\mu(x),
\end{gather*}
where $\dive_{T_x\mu}$ denotes the divergence restricted to the
$(n-1)$-plane $T_x\mu$. We say that $\mu$ has a \emph{weak mean curvature vector}
$H_\mu\in L^1_{\loc}(\mu,\,\R^n)$ if for all $\eta\in C^1_c(\Omega,\,\R^n)$ the first
variation is given by
\begin{gather*}
  \delta\mu(\eta)\,=\, -\int_\Omega H(x)\cdot\eta(x)\,d\mu(x).
\end{gather*}

For a family of measures $(\mu^t)_{t\in (0,T)}$ we denote by $\LL^1\otimes
\mu^t$ the product measure defined by
\begin{gather*}
  \big(\LL^1\otimes \mu^t\big)(\eta)\,:=\, \int_0^T \mu^t(\eta(t,\cdot))\,dt 
\end{gather*}
for any $\eta\in C^0_c(\Omega_T)$.
\end{notation}
We next recall the definition and basic properties of \emph{$L^2$-flows}
\cite{MuR08}, which describe evolutions of integral
varifolds with square integrable mean curvature vector and square
integrable \emph{generalized velocity}.
\begin{definition}
\label{def:gen-velo}
Let $(\mu^t)_{t\in (0,T)}$ be any family of integral varifolds
such that $\mu:=\LL^1\otimes\mu^t$ defines a Radon measure on $\Omega_T$ and
such that $\mu^t$ has a weak mean curvature $H(t,\cdot)\in L^2(\mu^t,\,\R^n)$
for almost all $t\in (0,T)$.

If there exists a positive
constant $C$ and 
a vector field $v\in L^2(\mu,\Rn)$ such that
\begin{gather}
  v(t,x)\perp T_x\mu^t\quad\text{ for }\mu\text{-almost all
  }(t,x)\in\Omega_T, 
  \label{eq:def_vel_perp}
  \\
  \label{eq:def_vel_L2_flow}
  \Big\vert \int_0^T\int_\Omega \big(\partial_t\eta+
  \nabla\eta\cdot v\big)\,d\mu^tdt \Big\vert \,\leq\,
  C\|\eta\|_{C^0(\Omega_T)}
\end{gather}
for all $\eta\in C^1_c((0,T)\times{\Omega})$, then we call  the
evolution 
$(\mu^t)_{t\in (0,T)}$ an \emph{$L^2$-flow}. A function $v\in
L^2(\mu,\Rn)$ satisfying \eqref{eq:def_vel_perp},
\eqref{eq:def_vel_L2_flow} is called a 
\emph{generalized velocity vector}.
\end{definition}
\begin{remark}
This definition is based on the observation that for a smooth evolution
$(M_t)_{t\in (0,T)}$ with square-integrable mean curvature $H(t,\cdot)$ and
square-integrable normal
velocity vector $V(t,\cdot)$
\begin{align*}
  &\frac{d}{dt}\int_{M_t}\zeta(t,x)\,d\Ha^{n-1}(x) -
  \int_{M_t}\partial_t\zeta(t,x)\,d\Ha^{n-1}(x)  -
  \int_{M_t}\nabla\zeta(t,x)\cdot V(t,x)\,d\Ha^{n-1}(x) \\
  =\,&  \int_{M_t}H(t,x)\cdot V(t,x)\zeta(t,x)\,d\Ha^{n-1}(x).
\end{align*}
Integrating this equality in time and using H\"olders inequality on the
right-hand side implies
\eqref{eq:def_vel_L2_flow}.
\end{remark}

Any
generalized velocity is (in a set of good points) uniquely determined by
the evolution $(\mu^t)_{t\in (0,T)}$.  In particular, in the case that
$(\mu^t)_{t\in (0,T)}$ describes a smooth evolution of smooth
hypersurfaces the generalized velocity coincides with the classical
velocity of hypersurfaces.
\begin{proposition}\cite[Proposition 3.3]{MuR08}
\label{prop:uni_gen_vel_perp} 
Let $(\mu^t)_{t\in(0,T)}$ be an $L^2$-flow and set
$\mu:=\LL^1\otimes\mu^t$. Let $v\in L^2(\mu,\,\R^n)$ be a generalized velocity
field in the sense of Definition \ref{def:gen-velo}. 
Then 
\begin{equation}\label{eq:gen_v_perp}
  \begin{pmatrix} 1 \\ v(t_0,x_0)\end{pmatrix} \in T_{(t_0,x_0)}\mu
\end{equation}
holds in $\mu$-almost all points $(t_0,x_0)\in\Omega_T$ where the
tangential plane of $\mu$ exists. The evolution
$(\mu^t)_{t\in(0,T)}$ uniquely determines $v$ in all points
$(t_0,x_0)\in \Omega_T$ where both tangential planes
$T_{(t_0,x_0)}\mu$ and $T_{x_0}\mu^{t_0}$ exist.
\end{proposition}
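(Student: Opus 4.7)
The proof splits into two claims: (i) $(1, v(t_0,x_0))$ lies in $T_{(t_0,x_0)}\mu$ at $\mu$-a.e.\ tangent point, and (ii) the uniqueness of $v$ at points where both tangent planes exist. Both rely on a blowup analysis at a generic point $p_0=(t_0,x_0)$; the second additionally uses the inclusion $\{0\}\times T_{x_0}\mu^{t_0}\subset T_{p_0}\mu$, which holds $\mu$-a.e.\ by virtue of the product structure $\mu=\LL^1\otimes\mu^t$.

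\emph{Plan for (i).} By the Riesz representation theorem, the bound \eqref{eq:def_vel_L2_flow} promotes the linear form $\eta\mapsto\int(\partial_t\eta+\nabla\eta\cdot v)\,d\mu$ to a signed Radon measure $\lambda$ with $|\lambda|(\Omega_T)\le C$, satisfying $\int\nabla_{(t,x)}\eta\cdot(1,v)\,d\mu=\int\eta\,d\lambda$ for all $\eta\in C^1_c(\Omega_T)$. I would then fix a point $p_0\in\Omega_T$ at which simultaneously (a) the tangent plane $T:=T_{p_0}\mu$ exists with positive density $\theta$, (b) $v$ is a Lebesgue point with respect to $\mu$, and (c) $|\lambda|(B_r(p_0))=O(r^n)$ as $r\downarrow 0$; all three are satisfied $\mu$-a.e., the last via the Radon--Nikodym decomposition of $|\lambda|$ relative to $\mu$. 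Testing the identity against $\eta_r(p):=\psi((p-p_0)/r)$, $\psi\in C^1_c(B_R(0))$, and substituting $\Phi_r(p):=(p-p_0)/r$ yields
\[
\int\nabla\psi(q)\cdot\bigl(1,v(p_0+rq)\bigr)\,d\mu_r(q)\;=\;r\int\psi\,d\lambda_r,
\]
with $\mu_r:=r^{-n}(\Phi_r)_\#\mu$ and $\lambda_r:=r^{-n}(\Phi_r)_\#\lambda$. As $r\to 0$, properties (a)--(b) give $\mu_r\schwstto\theta\Ha^n\lfloor T$ and $v(p_0+r\cdot)\to v(p_0)$ in $L^1_{\loc}(\mu_r,\R^n)$, so the left-hand side converges to $\theta\int_T\nabla\psi\cdot(1,v(p_0))\,d\Ha^n$; property (c) dominates the right-hand side by $r^{1-n}\|\psi\|_{\infty}|\lambda|(B_{Rr}(p_0))=O(r)$. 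Passing to the limit and using $\theta>0$ gives $\int_T\nabla\psi\cdot(1,v(p_0))\,d\Ha^n=0$ for every $\psi\in C^1_c(\R^{n+1})$, and choosing $\psi$ of product form $\phi(p^T)\chi(p^\perp)$ with $\chi'(0)\ne 0$ shows that $(1,v(p_0))$ has no $T^\perp$-component and therefore lies in $T$.

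\emph{Plan for (ii).} Given two generalized velocities $v_1,v_2$, step (i) applied to each yields $(0,v_1(p_0)-v_2(p_0))=(1,v_1)-(1,v_2)\in T_{p_0}\mu$. At a point where the spatial tangent plane $P:=T_{x_0}\mu^{t_0}$ also exists, the product structure forces $\{0\}\times P\subset T_{p_0}\mu$; since $T_{p_0}\mu$ is $n$-dimensional and $\{0\}\times P$ is $(n-1)$-dimensional, any element of $T_{p_0}\mu$ with vanishing time-component must lie in $\{0\}\times P$. Hence $v_1(p_0)-v_2(p_0)\in P$, and the perpendicularity \eqref{eq:def_vel_perp} forces $v_1(p_0)=v_2(p_0)$.

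The main obstacle is the tangent-plane inclusion $\{0\}\times T_{x_0}\mu^{t_0}\subset T_{p_0}\mu$ invoked in (ii). Geometrically natural from $\mu=\LL^1\otimes\mu^t$, its rigorous verification requires a Fubini-type comparison between the weak-$*$ limits of the spatial blowups $\mu^{t_0}_r$ and those of the spacetime blowups $\mu_r$, while ruling out pathological oscillations of $t\mapsto\mu^t$ that could degenerate the spacetime tangent off the expected $n$-plane spanned by the spatial tangent and the velocity direction.
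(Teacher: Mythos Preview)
The paper does not prove this proposition; it is quoted from \cite[Proposition~3.3]{MuR08} and used as a black box. There is therefore no proof in the present paper to compare against.

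Your outline is sound and is essentially the argument one expects. Part (i) is a standard tangent-measure blowup: representing the defect in \eqref{eq:def_vel_L2_flow} as a finite signed measure $\lambda$ via Riesz, rescaling at a $\mu$-generic point where the tangent plane exists, $v$ has a Lebesgue value, and $|\lambda|$ has finite upper $n$-density, and then passing to the limit yields $\int_T\nabla\psi\cdot(1,v(p_0))\,d\Ha^n=0$ for all test $\psi$, hence $(1,v(p_0))\in T$. The linear-algebra step in part (ii) is also correct once the inclusion $\{0\}\times T_{x_0}\mu^{t_0}\subset T_{p_0}\mu$ is available: since $(1,v_1(p_0))\in T_{p_0}\mu$ is transversal to $\{0\}\times\R^n$, the intersection $T_{p_0}\mu\cap(\{0\}\times\R^n)$ is exactly $(n-1)$-dimensional and must therefore equal $\{0\}\times T_{x_0}\mu^{t_0}$, forcing $v_1(p_0)-v_2(p_0)\in T_{x_0}\mu^{t_0}$ and then, by \eqref{eq:def_vel_perp}, $v_1(p_0)=v_2(p_0)$.

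You rightly flag the inclusion $\{0\}\times T_{x_0}\mu^{t_0}\subset T_{p_0}\mu$ as the one genuinely nontrivial point: the disintegration $\mu=\LL^1\otimes\mu^t$ by itself does not rule out oscillations of $t\mapsto\mu^t$ near $t_0$ that could tilt the spacetime tangent plane away from $\{0\}\times T_{x_0}\mu^{t_0}$. Closing this gap requires an additional regularity input---either the $L^2$-flow bound \eqref{eq:def_vel_L2_flow} itself, which limits how fast $\mu^t$ can move in $t$, or a slicing/Fubini argument for rectifiable measures in product form---and this is precisely where the work in \cite{MuR08} lies. Your plan identifies the right obstacle but stops short of resolving it.
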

We next define on the level of phase field approximation \emph{diffuse surface area
  measures}.
\begin{definition}\label{def:mu}
For $\eps>0$, $t\in (0,T)$ define a  Radon measures $\mu_\eps^t$ on
${\Omega}$, 
\begin{align}
  \mu_\eps^t \,&:=\, \Big(\frac{\eps}{2}|\nabla u_\eps|^2(t,\cdot)
  +\frac{1}{\eps}W(u_\eps(t,\cdot))\Big)\LL^n, \label{eq:def-mu-eps,t}
\end{align}
and for $\eps>0$ Radon measures $\mu_\eps$ on ${\Omega_T}$,
\begin{align}
  \mu_\eps \,&:=\, \Big(\frac{\eps}{2}|\nabla u_\eps|^2
  +\frac{1}{\eps}W(u_\eps)\Big)\LL^{n+1}. \label{eq:def-mu-eps}
\end{align}
\end{definition}
We will show, that the surface area measures $\mu_\eps^t$ converge in the
limit $\eps\to 0$ to an $L^2$-flow.

To the diffuse surface energy measures $\mu_\eps^t$ we associate
a normal direction $\nu_\eps(t,\cdot)$, varifolds $V_\eps^t$, and Radon measures
$\tilde{\mu}_\eps$, respectively, defined by 
\begin{align}
  \nu_\eps\,&:= 
  \begin{cases}
    \frac{\nabla u_\eps}{|\nabla u_\eps|} &\text{ if } \vert\nabla
    u_\eps|>0,\\
    \vec{e}_1 &\text{ else,}
  \end{cases}
  \label{eq:def-normal}\\
  V_\eps^t(\eta)\,&:=\, \int_\Omega
  \eta(x,\nu_\eps(t,x)^\perp)\,d\mu_\eps^t(x) \qquad\text{ for }\eta\in
  C^0_c(\Omega\times G_{n,n-1}), \label{eq:def-Veps}\\
  \tilde{\mu}_\eps\,&:=\, \eps|\nabla u_\eps|^2\,\LL^{n+1}. \label{eq:def-tilde-mu}
\end{align}
\section{Main results}
\label{sec:main}
\begin{theorem}\label{the:main}
Let $n=2,3$ and let $(u_\eps)_{\eps>0}$, $(g_\eps)_{\eps>0}$, and $(u_\eps^0)_{\eps>0}$
 be given such that
\eqref{eq:p-AC}-\eqref{eq:ass-g} holds, and such that 
\begin{gather}
  \sup_{\eps>0}\mu_\eps^0(\Omega)\,=:\, \Lambda_0<+\infty. 
  \label{eq:ass-ini}
\end{gather}
Then there exists a
subsequence $\eps\to 0$, 
a phase indicator function $u\in BV(\Omega_T)\cap
L^\infty(0,T;BV(\Omega;\{-1,1\})$, an $L^2$-flow $(c_0^{-1}\mu^t)_{t\in
(0,T)}$, and a vector-field $g\in L^2(\mu;\Rn)$ such that the following
properties hold: 
\begin{enumerate}
\setlength{\leftmargin}{-10ex}
\item
Convergence of phase fields
 \begin{alignat}{2}
  u_\eps\,&\to\, u&&\text{ in }L^1(\Omega_T),\label{eq:conv-MM}\\
  u_\eps(t,\cdot)\,&\to\, u(t,\cdot)\qquad&&\text{ in }L^1(\Omega)\text{
  for all }t\in [0,T]. \label{eq:conv-MM-t}
\end{alignat}
Moreover, $u\in C^{0,\frac{1}{2}}([0,T],L^1(\Omega))$.
\item
Convergence of diffuse surface area measures
\begin{alignat}{2}
  \mu_\eps\,&\to\, \mu \quad&&\text{ as Radon-measures on
  }{\Omega_T}, \label{eq:conv-mu}\\
  \mu_\eps^t\,&\to\, \mu^t&&\text{ for all }t\in [0,T], \text{ as Radon
  measures on }{\Omega}, 
  \label{eq:conv-mu-t}\\
  \mu^t\,&\geq\, \frac{c_0}{2}|\nabla u(t,\cdot) |. \label{eq:spt-mu}
\end{alignat}
\item
Convergence of force fields
\begin{gather}
  \lim_{\eps\to 0}\int_{\Omega_T} -\eta\cdot\nabla u_\eps
  g_\eps\,dx\,dt\,=\, \int_{\Omega_T} \eta\cdot g\,d\mu
  \label{eq:conv-g}
\end{gather}
holds for all  $\eta\in C^0_c(\Omega_T)$.
\item
Motion law.
\begin{gather}
  v \,=\, H + g\label{eq:lim-AC}
\end{gather}
holds $\mu$-almost everywhere, where $H(t,\cdot)$ denotes the weak mean
curvature of $\mu^t$ and where $v$ denotes the generalized velocity of
$(\mu^t)_{t\in (0,T)}$ in the sense of Definition \ref{def:gen-velo}.
\end{enumerate}
\end{theorem}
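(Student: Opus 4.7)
The plan is to reduce the theorem to the main compactness result of \cite{MuR08} via the key observation that the forcing assumption \eqref{eq:ass-g} translates directly into a uniform action bound. Indeed, using the PDE \eqref{eq:p-AC} to substitute $-\eps\Delta u_\eps+\tfrac{1}{\eps}W'(u_\eps)=-\eps\partial_t u_\eps+g_\eps$ into \eqref{def:action}, the integrand simplifies to $g_\eps^2/\eps$, giving $\A_\eps(u_\eps)=\Lambda$. Together with the initial energy bound $\Lambda_0$ and a standard energy dissipation estimate (multiply \eqref{eq:p-AC} by $\partial_t u_\eps$, absorb $g_\eps\partial_t u_\eps$ via Young), one controls $\sup_t E_\eps(u_\eps(t,\cdot))$ and $\int_0^T\!\!\int_\Omega \eps(\partial_t u_\eps)^2$ uniformly. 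This puts us squarely in the hypotheses under which \cite{MuR08} delivers items (1) and (2): the $L^1$ compactness of $u_\eps$, the $C^{0,1/2}$-in-time regularity of the phase indicator, weak-* convergence of $\mu_\eps^t$ to an integral varifold $\mu^t$ with $L^2$ weak mean curvature, and the $L^2$-flow property of $(c_0^{-1}\mu^t)$. The rectifiability of $\mu^t$ for each $t$ uses the diffuse Allard theorem \cite{RSc06}, which is why $n=2,3$ is required.

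For item (3), I would view the vector-valued measures $-\nabla u_\eps g_\eps\,\LL^{n+1}$ on $\Omega_T$ and bound them in total variation by Cauchy–Schwarz:
\begin{gather*}
\int_{\Omega_T}|\nabla u_\eps\,g_\eps|\,dx\,dt\,\leq\, \tfrac{1}{2}\tilde\mu_\eps(\Omega_T)+\tfrac{1}{2}\int_{\Omega_T}\tfrac{1}{\eps}g_\eps^2\,dx\,dt,
\end{gather*}
which is uniformly bounded. After extracting a further subsequence we obtain a vector-valued Radon measure as a weak-* limit, and a second application of Cauchy–Schwarz against any $\eta\in C^0_c(\Omega_T)$ yields the domination $|\!\int\!\eta\cdot g\,d\mu|\leq \Lambda^{1/2}\|\eta\|_{L^2(\mu)}$ (once one uses $\tilde\mu_\eps\rightharpoonup^* \mu$, which is part of the \cite{MuR08} package because the discrepancy measure vanishes). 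Radon–Nikodym then produces the density $g\in L^2(\mu;\R^n)$.

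The main obstacle and final step is verifying the motion law \eqref{eq:lim-AC}. The idea is to test the PDE against $\nabla u_\eps\cdot\eta(t,x)$ for $\eta\in C^1_c(\Omega_T;\R^n)$. The elliptic term, after integration by parts, produces the diffuse analog of the first variation,
\begin{gather*}
\int_{\Omega_T}\bigl(\tfrac{\eps}{2}|\nabla u_\eps|^2+\tfrac{1}{\eps}W(u_\eps)\bigr)\dive\eta-\eps(\nabla u_\eps\otimes\nabla u_\eps):\nabla\eta \,dx\,dt,
\end{gather*}
which, by the discrepancy vanishing and the varifold convergence of $V_\eps^t$ obtained in \cite{MuR08}, converges to $\int\!\dive_{T_x\mu^t}\!\eta\,d\mu=-\int\eta\cdot H\,d\mu$. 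The right-hand side contributes $\int\eta\cdot g\,d\mu$ by item (3). The parabolic term $\eps\partial_t u_\eps\nabla u_\eps\cdot\eta$ must reproduce $\int\eta\cdot v\,d\mu$. This is the delicate point: rather than identify $v$ directly from $\eps\partial_t u_\eps \nabla u_\eps$ (whose $L^2$ norm is not controlled in terms of the velocity), I would invoke the uniqueness statement of Proposition \ref{prop:uni_gen_vel_perp} — the generalized velocity of the limiting $L^2$-flow is intrinsically determined on the good set where both tangent planes exist. Plugging gradient test fields $\eta=\nabla\zeta$ into \eqref{eq:def_vel_L2_flow} and comparing with the integrated PDE tested against $\zeta\,\partial_t u_\eps$ (or equivalently using the action structure as in \cite{MuR08}) identifies $v$ as the $L^2$ weak limit of the diffuse velocity $\eps\partial_t u_\eps\nabla u_\eps/\tilde\mu_\eps$-density in the normal direction. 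Combining the three limits gives, for every $\eta$,
\begin{gather*}
\int_{\Omega_T}\eta\cdot v\,d\mu \,=\, -\int_{\Omega_T}\eta\cdot H\,d\mu+\int_{\Omega_T}\eta\cdot g\,d\mu,
\end{gather*}
which is exactly \eqref{eq:lim-AC} in the distributional sense. Since $v-H-g$ is an $L^2(\mu;\R^n)$ vector field, the equality holds $\mu$-a.e. The hardest technical ingredient throughout is ensuring that the cross term $\eps\nabla u_\eps\otimes\nabla u_\eps$ passes to the correct tangential projection $(I-\nu\otimes\nu)\,d\mu$ under these weak hypotheses; this rests entirely on the vanishing of the discrepancy measure, which is the output of the action control established in the first paragraph.
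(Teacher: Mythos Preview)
Your overall plan is correct and matches the paper's structure: action bound $\Rightarrow$ uniform energy and dissipation estimates $\Rightarrow$ compactness and integrality from \cite{MuR08,RSc06} $\Rightarrow$ test the PDE against $\eta\cdot\nabla u_\eps$ and pass to the limit termwise. Your treatment of item~(3) via a Cauchy--Schwarz/Radon--Nikodym argument is equivalent to the paper's use of Hutchinson's measure-function-pair compactness \cite{Hutc86}; the latter is just a packaged version of what you wrote.

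There is, however, an unnecessary detour and a minor confusion in your handling of the parabolic term. You write that identifying $v$ from $\eps\partial_t u_\eps\nabla u_\eps$ is ``the delicate point'' and propose to go through the uniqueness statement Proposition~\ref{prop:uni_gen_vel_perp}. This is not needed: exactly the same mechanism you used for $g$ works here. Define $v_\eps:=-\dfrac{\partial_t u_\eps}{|\nabla u_\eps|}\,\nu_\eps$ on $\{\nabla u_\eps\neq 0\}$; then
\[
\int_{\Omega_T}|v_\eps|^2\,d\tilde\mu_\eps\;\le\;\int_{\Omega_T}\eps(\partial_t u_\eps)^2\,dx\,dt,
\]
which is uniformly bounded by your energy dissipation estimate. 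Since $\tilde\mu_\eps\to\mu$, Hutchinson's compactness yields $v\in L^2(\mu;\Rn)$ with $\int\eta\cdot v\,d\mu=\lim_\eps\int(-\eta\cdot\eps\partial_t u_\eps\nabla u_\eps)\,dx\,dt$ directly. The paper then verifies \emph{a posteriori} that this $v$ satisfies \eqref{eq:def_vel_perp}--\eqref{eq:def_vel_L2_flow} (perpendicularity via the strong convergence of the projections $P_\eps$, and \eqref{eq:def_vel_L2_flow} by integrating the localized energy identity \eqref{eq:KRT-rep}). No appeal to uniqueness is required.

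Finally, watch your signs. With the conventions of the paper (so that $\int\eta\cdot v\,d\mu=\lim\int(-\eta)\cdot\eps\partial_t u_\eps\nabla u_\eps$ and $\int\eta\cdot g\,d\mu=\lim\int(-\eta)\cdot\nabla u_\eps g_\eps$), multiplying \eqref{eq:p-AC} by $-\eta\cdot\nabla u_\eps$ and passing to the limit gives
\[
\int_{\Omega_T}\eta\cdot v\,d\mu\;=\;\int_{\Omega_T}\eta\cdot H\,d\mu\;+\;\int_{\Omega_T}\eta\cdot g\,d\mu,
\]
not the version with $-H$ you wrote; your displayed formula would yield $v=-H+g$.
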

\begin{remark}\label{rem:undercurrent}
Our formulation of forced mean curvature flow can be understood as a
generalized formulation of forced mean curvature flow \emph{for phase
boundaries}. In particular, $H$, $v$, and $g$ restricted to the phase
interface $\partial^*\{u=1\}$ are a property of $u$ and do not depend on $\mu$.
In fact, by \cite[Proposition 4.5]{MuR08} we know  that $V:=
v\cdot\frac{\nabla u}{|\nabla u|}$ belongs to $L^1(\vert\nabla u\vert)$  and
that for every $\eta\in C^1_c(\Omega_T)$ 
\begin{equation*}
\int_0^T\int_\Omega V(t,\cdot)\eta(t,\cdot)\,d|\nabla u(t,\cdot)|\, dt
=-\int_{0}^T\int_\Omega u(t,x)\partial_t\eta(t,x)\,dxdt,
\end{equation*}
holds. 
By \eqref{eq:gen_v_perp} and the rectifiability of $\partial^* \{u=1\}$ we
conclude that $V$ is $\Ha^n$-almost everywhere on $\partial^* \{u=1\}$
uniquely determined 
by $u$. Moreover, by
\cite{Menn08} we have that $H(t,\cdot)$ restricted to $\partial^* \{u(t,\cdot)=1\}$ is
$\Ha^{n-1}$-almost everywhere uniquely determined by $u(t,\cdot)$. Therefore
\eqref{eq:lim-AC} shows that also $g$ restricted to $\partial^* \{u=1\}$ is a
property of $u$.
\end{remark}
\begin{remark}\label{rem:unbounded}
Our main results hold also if $\Omega$ is
unbounded, with the only change that the limiting phase field $u$
does not belong to $L^1(\Omega_T)$ but for all $D\Subset \Omega$ to
$L^1((0,T)\times D)$ and that the convergence in \eqref{eq:conv-MM},
\eqref{eq:conv-MM-t} is for all $D\Subset \Omega$ in $L^1((0,T)\times D)$ and
$L^1(D)$, respectively. To obtain the corresponding compactness properties, in
Proposition \ref{prop:MM} we use in the case that $\Omega$ is bounded
the Modica--Mortola result \cite{MoMo77} on the Gamma convergence of the
diffuse area. If $\Omega$ is unbounded one uses the Gamma convergence
with respect to the $L^1_{loc}$ topology \cite{DaMo81} instead.
\end{remark}
\section{Proof of Theorem \ref{the:main}}
At several instances we will pass to subsequences $\eps\to 0$ without
relabelling.
Many arguments are from
\cite{MuR08}; to make the paper
self-consistent we give in any case at least a sketch of proof.
\subsection{Uniform estimates}
\label{sec:est}
 By our
assumption \eqref{eq:ass-g} we have a uniform bound 
on the action functional for the sequence $(u_\eps)_{\eps>0}$, i.e.
\begin{gather*}
  \A_\eps(u_\eps)\,=\, \int_{\Omega_T} \Big(\sqrt{\eps}\partial_t u
  +\frac{1}{\sqrt{\eps}}\big(-\eps\Delta u 
  +\frac{1}{\eps}W^\prime(u)\big)\Big)^2\,dx\,dt\,=\, \int_{\Omega_T}
  \frac{1}{\eps}g_\eps^2\,dxdt\,\leq\, \Lambda.
\end{gather*}
In particular all results from \cite{MuR08} apply. 
We next prove uniform bounds for the surface area and for diffuse analog
of the velocity and mean curvature. We
denote by $w_\eps$ the \emph{chemical potential}, that is the  $L^2$-gradient of the diffuse
surface energy,
\begin{gather*}
  w_\eps\,:=\, -\eps\Delta u_\eps 
  +\frac{1}{\eps}W^\prime(u_\eps).
\end{gather*}
From \eqref{eq:bdry}, \eqref{eq:ass-g} we deduce that for any $0\leq
t_0\leq T$ 
\begin{align*}
  \Lambda\geq\,&\int_0^{t_0}\int_\Omega \frac{1}{\eps}\Big({\eps}\partial_t u_\eps
  + w_\eps\Big)^2\,dxdt\\
  =\,&
  \int_0^{t_0}\int_{\Omega} \Big(\eps (\partial_t u_\eps)^2 +\frac{1}{\eps}
  w_\eps^2\Big)\,dxdt + 2 E_\eps(u_\eps(t_0,\cdot)) -2 E_\eps(u_\eps(0,\cdot)),
\end{align*}
which, by \eqref{eq:ass-g} and \eqref{eq:ass-ini}, yields that
\begin{gather}
  \int_{\Omega_T} \Big(\eps (\partial_t u_\eps)^2 +\frac{1}{\eps} w_\eps^2
  \Big)\,dxdt \,\leq\, \Lambda+ 2\Lambda_0, 
  \label{ass:bound-squares}\\ 
  \max_{0\leq t\leq T} E_\eps(u_\eps(t,\cdot)) \,\leq\, \frac{1}{2}\Lambda + \Lambda_0.
  \label{ass:bound-mu}
\end{gather}
We next derive an estimate on the change of the diffuse surface area \emph{measures}
in time.
\begin{lemma}
\label{lem:bv-mu}
There exists $C=C(\Lambda,\Lambda_0)$ such that for all
$\psi\in C^1(\overline{\Omega})$ 
\begin{gather}
  \int_0^T |\partial_t \mu_\eps^t(\psi)|\,dt\,\leq\,
  C\|\psi\|_{C^1(\overline{\Omega})}. \label{eq:bv-mu}
\end{gather}
\end{lemma}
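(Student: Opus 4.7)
My plan is to differentiate $\mu_\eps^t(\psi)$ in time, use the PDE \eqref{eq:p-AC} and an integration by parts to turn this into an expression controlled by the quantities already bounded in \eqref{ass:bound-squares} and \eqref{ass:bound-mu}, and then integrate in $t$ using Cauchy--Schwarz.

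Concretely, I would start by writing
\begin{align*}
\partial_t \mu_\eps^t(\psi)
\,=\, \int_\Omega \psi\Big(\eps\,\nabla u_\eps\cdot\nabla\partial_t u_\eps
+\tfrac{1}{\eps}W'(u_\eps)\partial_t u_\eps\Big)\,dx.
\end{align*}
Integrating by parts in the first term and using the Neumann condition \eqref{eq:bdry}, the boundary integral vanishes and we obtain
\begin{align*}
\partial_t \mu_\eps^t(\psi)
\,=\, \int_\Omega \psi\,\partial_t u_\eps\,w_\eps\,dx
-\int_\Omega \eps\,\partial_t u_\eps\,\nabla\psi\cdot\nabla u_\eps\,dx,
\end{align*}
where $w_\eps=-\eps\Delta u_\eps+\tfrac{1}{\eps}W'(u_\eps)$ is the chemical potential introduced above.

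Taking absolute values, applying Cauchy--Schwarz in the space variable, and writing $\eps|\partial_t u_\eps||\nabla u_\eps|=(\sqrt{\eps}|\partial_t u_\eps|)(\sqrt{\eps}|\nabla u_\eps|)$ respectively $|\partial_t u_\eps||w_\eps|=(\sqrt{\eps}|\partial_t u_\eps|)(\tfrac{1}{\sqrt{\eps}}|w_\eps|)$, I get
\begin{align*}
|\partial_t \mu_\eps^t(\psi)|
\,\leq\, \|\psi\|_{C^1(\overline{\Omega})}
\Big(\int_\Omega \eps(\partial_t u_\eps)^2\Big)^{1/2}
\bigg[\Big(\int_\Omega \tfrac{1}{\eps}w_\eps^2\Big)^{1/2}
+\Big(\int_\Omega \eps|\nabla u_\eps|^2\Big)^{1/2}\bigg].
\end{align*}
Integrating in time and applying Cauchy--Schwarz once more, the first product is controlled directly by \eqref{ass:bound-squares}, while for the second one I bound $\int_\Omega\eps|\nabla u_\eps|^2\leq 2E_\eps(u_\eps(t,\cdot))\leq \Lambda+2\Lambda_0$ uniformly in $t$ via \eqref{ass:bound-mu}, and then control $\int_0^T(\int_\Omega\eps(\partial_t u_\eps)^2)^{1/2}\,dt$ by $T^{1/2}(\Lambda+2\Lambda_0)^{1/2}$ using \eqref{ass:bound-squares} again. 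Combining these gives \eqref{eq:bv-mu} with a constant $C=C(\Lambda,\Lambda_0,T)$.

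The only delicate point is that, strictly speaking, $\partial_t\mu_\eps^t(\psi)$ must be understood in the distributional sense in $t$ since we are not assuming extra regularity of $u_\eps$ beyond that which comes from the equation; however, standard parabolic theory for \eqref{eq:p-AC} gives enough regularity for the integration by parts above to be justified, so this is not a substantive obstacle. The real content of the lemma is just to identify the right pairing of the three bounded quantities $\sqrt{\eps}\partial_t u_\eps$, $\tfrac{1}{\sqrt{\eps}}w_\eps$, and $\sqrt{\eps}\nabla u_\eps$ so that the estimate closes.
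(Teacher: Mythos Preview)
Your proof is correct and follows essentially the same route as the paper: differentiate $\mu_\eps^t(\psi)$, integrate by parts using \eqref{eq:bdry} to obtain
\[
\partial_t\mu_\eps^t(\psi)=\int_\Omega \psi\,\partial_t u_\eps\,w_\eps\,dx-\int_\Omega \eps\,\partial_t u_\eps\,\nabla\psi\cdot\nabla u_\eps\,dx,
\]
and then bound both terms using \eqref{ass:bound-squares} and \eqref{ass:bound-mu}. The only difference is in how the first integral is handled: you apply Cauchy--Schwarz directly to $\partial_t u_\eps\,w_\eps$, whereas the paper substitutes the algebraic identity $2\partial_t u_\eps w_\eps=\tfrac{1}{\eps}g_\eps^2-\eps(\partial_t u_\eps)^2-\tfrac{1}{\eps}w_\eps^2$ (which follows from $g_\eps=\eps\partial_t u_\eps+w_\eps$) and then estimates each square separately. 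Your version is arguably cleaner for this lemma alone; the paper's identity has the advantage of being reused verbatim later (see \eqref{eq:KRT-rep}) when deriving the approximate Brakke-type energy identity. In both cases the constant ends up depending on $T$ as well, which the paper also notes in its proof.
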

\begin{proof}
Using \eqref{eq:bdry} we compute that 
\begin{align}
  2\partial_t\mu_\eps^t(\psi) \,
  =\, &\int_\Omega \frac{1}{\eps}g_\eps(t,x)^2 \psi(x)\, dx - \int_{\Omega}
  \big(\eps(\partial_t u_\eps)^2 
  +\frac{1}{\eps}w_\eps^2\big)(t,x)\psi(x) \,dx\notag\\
  &  - 2\int_{\Omega} \eps\nabla\psi(x)\cdot \partial_t u_\eps(t,x)\nabla
  u_\eps(t,x)\, dx. \label{eq:KRT-1} 
\end{align}
By \eqref{ass:bound-squares}, \eqref{ass:bound-mu} we obtain
\begin{align}
  \Big|2\int_{\Omega_T} \eps\nabla\psi\cdot \partial_t u_\eps\nabla
  u_\eps\,dxdt\Big|\,\leq\,&
  \int_{\Omega_T} |\nabla\psi|\big(\eps (\partial_t u_\eps)^2 +\eps
  |\nabla u_\eps|^2\big)\,dxdt
  \notag\\
  \leq\,&
  C(\Lambda,\Lambda_0,T)\|\nabla\psi\|_{C^0(\overline{\Omega})} \label{eq:KRT-2}
\end{align}
and deduce from \eqref{eq:ass-g}, \eqref{ass:bound-squares},
\eqref{eq:KRT-1}, \eqref{eq:KRT-2} that 
\begin{gather*}
  \int_0^T |\partial_t\mu_\eps^t(\psi)|\,dt\,\leq\,
  C(\Lambda,\Lambda_0,T) \Big(\|\psi\|_{C^0(\overline{\Omega})} +
  \|\nabla\psi\|_{C^0(\overline{\Omega})}\Big),
\end{gather*}
which proves \eqref{eq:bv-mu}.  
\end{proof}
%
\subsection{Convergence of phase fields and diffuse surface area measures}
\label{sec:proof-phases}
\begin{proposition}\label{prop:MM}
There exists a subsequence $\eps\to 0$ and a phase indicator function $u\in
BV(\Omega_T,\{-1,1\})\cap L^\infty(0,T;BV(\Omega))$ 
such that \eqref{eq:conv-MM}, \eqref{eq:conv-MM-t} hold.
Moreover $u\in C^{0,1/2}(0,T;L^1(\Omega))$.
\end{proposition}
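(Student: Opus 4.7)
The plan is to apply a Modica--Mortola compactness argument to the auxiliary sequence $v_\eps:=G(u_\eps)$, where $G(r):=\int_0^r\sqrt{2W(s)}\,ds$, and to upgrade the resulting spatial compactness to pointwise-in-$t$ convergence using the diffuse velocity bound \eqref{ass:bound-squares}.

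Young's inequality gives the pointwise estimate
\begin{equation*}
|\nabla v_\eps|\,=\,\sqrt{2W(u_\eps)}\,|\nabla u_\eps|\,\leq\,\frac{\eps}{2}|\nabla u_\eps|^2+\frac{W(u_\eps)}{\eps},
\end{equation*}
and analogously $|\partial_t v_\eps|=\sqrt{2W(u_\eps)}|\partial_t u_\eps|$. Combining this with Cauchy--Schwarz, \eqref{ass:bound-squares} and \eqref{ass:bound-mu} yields
\begin{equation*}
\int_{\Omega_T}\bigl(|\nabla v_\eps|+|\partial_t v_\eps|\bigr)\,dx\,dt\,\leq\, C(\Lambda,\Lambda_0,T),
\end{equation*}
so $(v_\eps)$ is uniformly bounded in $BV(\Omega_T)$. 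Moreover \eqref{ass:bound-mu} also yields a uniform $BV(\Omega)$-bound on the slices $v_\eps(t,\cdot)$ for every $t\in[0,T]$, and the slicing inequality
\begin{equation*}
\|v_\eps(t_2,\cdot)-v_\eps(t_1,\cdot)\|_{L^1(\Omega)}\,\leq\,\Bigl(\int_{t_1}^{t_2}\!\!\int_\Omega\!\eps(\partial_t u_\eps)^2\,dx\,dt\Bigr)^{1/2}\!\Bigl(\int_{t_1}^{t_2}\!\!\int_\Omega\!\tfrac{2W(u_\eps)}{\eps}\,dx\,dt\Bigr)^{1/2}\!\leq\, C|t_2-t_1|^{1/2}
\end{equation*}
makes the maps $t\mapsto v_\eps(t,\cdot)\in L^1(\Omega)$ uniformly $\tfrac12$-H\"older continuous. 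Since $BV(\Omega)\hookrightarrow L^1(\Omega)$ compactly, an Arzel\`a--Ascoli argument in time yields a subsequence $\eps\to 0$ and a function $v\in C^{0,1/2}([0,T];L^1(\Omega))\cap L^\infty(0,T;BV(\Omega))\cap BV(\Omega_T)$ with $v_\eps(t,\cdot)\to v(t,\cdot)$ in $L^1(\Omega)$ for every $t\in[0,T]$.

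It remains to identify the limit in terms of a phase indicator. The bound \eqref{ass:bound-mu} gives $\int_\Omega W(u_\eps(t,\cdot))\,dx\leq \eps(\tfrac12\Lambda+\Lambda_0)\to 0$, so any a.e.\ subsequential limit $u$ of $u_\eps$ satisfies $W(u)=0$, i.e.\ $u(t,x)\in\{-1,1\}$ a.e. The quartic growth of $W$ at infinity (yielding $\int_{\{|u_\eps|\geq 2\}}|u_\eps|^4\leq C\int W(u_\eps)$, hence a uniform $L^4(\Omega_T)$-bound on $u_\eps$) gives equi-integrability, and since $G$ is continuous and strictly increasing on $\R$ with continuous inverse on $G(\R)$, the relation $v=G(u)$ forces $u_\eps\to u:=G^{-1}(v)$ in $L^1(\Omega)$ for every $t$ and in $L^1(\Omega_T)$ by dominated convergence, establishing \eqref{eq:conv-MM} and \eqref{eq:conv-MM-t}. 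Finally, because $G(\pm 1)=\pm c_0/2$ one has $v=(c_0/2)u$, and the $BV$- and H\"older-regularity of $v$ transfer directly to $u$, yielding $u\in BV(\Omega_T,\{-1,1\})\cap L^\infty(0,T;BV(\Omega))$ and $u\in C^{0,1/2}([0,T];L^1(\Omega))$.

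The main technical obstacle I expect is the identification step: passing from $L^1$-convergence of $v_\eps=G(u_\eps)$ to $L^1$-convergence of $u_\eps$ itself requires ruling out escape of mass at $|u_\eps|\to\infty$, which the quartic coercivity of $W$ at infinity supplies. A secondary but routine point is reconciling the spatial BV-compactness with the temporal H\"older equicontinuity to obtain pointwise-in-$t$ convergence; this is the content of the Arzel\`a--Ascoli step above.
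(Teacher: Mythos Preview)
Your proof is correct and rests on the same essential ingredients as the paper's: the Modica--Mortola substitution $v_\eps=G(u_\eps)$, the Young-inequality bound $|\nabla v_\eps|\leq \tfrac{\eps}{2}|\nabla u_\eps|^2+\tfrac{1}{\eps}W(u_\eps)$, and the $C^{0,1/2}$-in-time estimate obtained by combining \eqref{ass:bound-squares} with \eqref{ass:bound-mu}. The difference is organizational, in how convergence \emph{for every} $t$ is reached. The paper first cites Modica--Mortola compactness on the space-time cylinder $\Omega_T$ \cite{MoMo77,Modi87} to obtain $u_\eps\to u$ in $L^1(\Omega_T)$ and hence $u_\eps(t,\cdot)\to u(t,\cdot)$ for \emph{almost every} $t$; it then proves the H\"older estimate directly for the limit $u$ and uses it to extend $u$ continuously to all of $[0,T]$. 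You instead establish the uniform H\"older bound already at the level of $v_\eps$ and invoke an Arzel\`a--Ascoli argument in $C([0,T];L^1(\Omega))$ (pointwise precompactness coming from the uniform $BV(\Omega)$-bound and the compact embedding $BV(\Omega)\hookrightarrow L^1(\Omega)$), which delivers $v_\eps(t,\cdot)\to v(t,\cdot)$ for \emph{all} $t$ in one stroke; only afterwards do you identify $u=G^{-1}(v)$. Your route is arguably cleaner on the ``all $t$'' conclusion, at the price of having to pass explicitly from $L^1$-convergence of $G(u_\eps)$ back to $L^1$-convergence of $u_\eps$ (your equi-integrability argument via the quartic growth of $W$), a step that the paper absorbs into the Modica--Mortola black box.
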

\begin{proof}
By \eqref{ass:bound-squares}, \eqref{ass:bound-mu} we can apply the
compactness and lower bound from Modica and
Mortola \cite{MoMo77,Modi87} in the time-space domain and obtain the
existence of $u\in BV(\Omega_T,\{-1,1\})$ such that \eqref{eq:conv-MM}
holds for a subsequence $\eps\to 0$ and such that $u_\eps(t,\cdot)\,\to\,
u(t,\cdot)$ for almost all $t\in (0,T)$. Next we apply again
\cite{MoMo77,Modi87} and we obtain from \eqref{ass:bound-mu} that
$u(t,\cdot)\in BV(\Omega)$ with uniformly bounded $BV$-norm.
Moreover, for the function
\begin{gather}
  G(r)\,:=\, \int_0^r \sqrt{2W(s)}\,ds \label{eq:def-H}
\end{gather}
and almost all $t\in (0,T)$ we have $G(u_\eps(t,\cdot))\,\to\, \frac{c_0}{2}
u(t,\cdot)$ and hence for almost all $t_1,t_2$
\begin{align*}
  \frac{c_0}{2}\int_\Omega |u(t_1,x)- u(t_2,x)|\,dx\,&=\, \lim_{\eps\to 0}
  \int_\Omega |G(u_\eps(t_1,x))- G(u_\eps(t_2,x))|\,dx\\
  &=\, \int_\Omega \Big|\int_{t_1}^{t_2} \partial_\tau
  G(u_\eps(\tau,x))\,d\tau\Big|\,dx\\
  &=\, \int_\Omega \Big|\int_{t_1}^{t_2} \partial_\tau u_\eps(\tau,x)
  \sqrt{2W(u_\eps(\tau,x))} \,d\tau\Big|\,dx\\
  &\leq\, \Big(\int_{\Omega_T} \eps (\partial_\tau
  u_\eps(\tau,x))^2\,dx d\tau\Big)^{1/2}
  \Big|\int_{t_1}^{t_2}\int_\Omega \frac{2}{\eps}W(u_\eps(t,x))\,dx dt
  \Big|^{1/2} \\
  &\leq \sqrt{|t_1-t_2|} C(\Lambda,\Lambda_0,T),
\end{align*}
where we have used \eqref{ass:bound-squares}, \eqref{ass:bound-mu}.
This shows that $u$ can be extended to $u\in
C^{0,1/2}([0,T],L^1(\Omega))$. Then we also obtain
$u_\eps(t,\cdot)\,\to\, u(t,\cdot)$ for all $t\in (0,T)$.
\end{proof}
We next prove the convergence of the surface area measures $\mu_\eps$ and
$\mu_\eps^t$ for almost all times.
\begin{proposition}\label{prop:KRT}
There exists a subsequence $\eps\to 0$
and Radon measures $\mu^t, t\in [0,T]$, 
such that \eqref{eq:conv-mu-t}, \eqref{eq:spt-mu} hold,
such that
\begin{gather}
  \mu_\eps\,\to\, \mu\quad\text{ as Radon measures on }\Omega_T,\qquad
  \mu\,=\, \LL^1\otimes \mu^t, \label{eq:disint-mu}
\end{gather}
and such that for all $\psi\in
C^1(\overline{\Omega})$ the function 
\begin{gather}
  t\mapsto \mu^t(\psi)\quad\text{ is of bounded variation in }(0,T).
  \label{eq:mu-bv} 
\end{gather}
\end{proposition}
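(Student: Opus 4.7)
The plan is to combine the uniform mass bound \eqref{ass:bound-mu} with the BV-in-time estimate from Lemma \ref{lem:bv-mu} via a Helly-type selection argument and a diagonal procedure over a countable dense set of test functions.

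First, from \eqref{ass:bound-mu} the measures $\mu_\eps$ have uniformly bounded total mass on $\Omega_T$, so by weak-$*$ compactness of Radon measures we may extract a subsequence with $\mu_\eps\rightharpoonup^* \mu$ as Radon measures on $\Omega_T$. To construct the time slices, fix a countable set $\{\psi_k\}_{k\in\N}\subset C^1(\overline\Omega)$ that is dense in $C^0(\overline\Omega)$ (and in $C^1(\overline\Omega)$ for the norm we care about). By Lemma \ref{lem:bv-mu} the functions $t\mapsto \mu_\eps^t(\psi_k)$ are uniformly bounded (by \eqref{ass:bound-mu}) and have uniformly bounded total variation on $(0,T)$. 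Helly's selection theorem applied for each $k$, combined with a standard Cantor diagonal argument, yields a subsequence $\eps\to 0$ and functions $\alpha_k:[0,T]\to\R$ of bounded variation such that $\mu_\eps^t(\psi_k)\to \alpha_k(t)$ for every $t\in[0,T]$ and every $k$.

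Next I would define $\mu^t$ for each $t$ by extending $\psi_k\mapsto \alpha_k(t)$ to a positive linear functional on $C^0(\overline\Omega)$. Density of $\{\psi_k\}$, positivity of $\mu_\eps^t$, and the uniform bound $\mu_\eps^t(1)\leq \Lambda+2\Lambda_0$ make $\psi_k\mapsto\alpha_k(t)$ uniformly continuous in the sup-norm, so it extends uniquely to a bounded positive linear functional, and the Riesz representation theorem gives a Radon measure $\mu^t$ on $\overline\Omega$ with $\mu^t(\psi_k)=\alpha_k(t)$. A further density argument using the uniform mass bound upgrades the convergence to $\mu_\eps^t(\psi)\to\mu^t(\psi)$ for every $\psi\in C^0(\overline\Omega)$ and every $t\in[0,T]$, proving \eqref{eq:conv-mu-t} and, by passing to the limit in the BV estimate, \eqref{eq:mu-bv}.

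To identify $\mu=\LL^1\otimes\mu^t$ it suffices to test against tensor products $\eta(t,x)=\varphi(t)\psi(x)$ with $\varphi\in C_c^0(0,T)$ and $\psi\in C^1(\overline\Omega)$: the slice convergence plus the uniform bound $|\mu_\eps^t(\psi)|\leq C\|\psi\|_{C^0}$ allows dominated convergence in $t$, giving $\mu_\eps(\eta)\to\int_0^T\varphi(t)\mu^t(\psi)\,dt$, which matches $\mu(\eta)$ and suffices by density to conclude \eqref{eq:disint-mu}. For the lower bound \eqref{eq:spt-mu}, for each fixed $t$ the slice $u_\eps(t,\cdot)\to u(t,\cdot)$ in $L^1(\Omega)$ by Proposition \ref{prop:MM} and $E_\eps(u_\eps(t,\cdot))$ is uniformly bounded, so the Modica--Mortola lower bound applied slicewise yields $\mu^t\geq \tfrac{c_0}{2}|\nabla u(t,\cdot)|$, using that the limit of $\mu_\eps^t$ dominates the limit of $\tfrac{c_0}{2}|\nabla G(u_\eps(t,\cdot))|$ with $G$ as in \eqref{eq:def-H}.

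The main obstacle is the interplay between the diagonal selection (which \emph{a priori} only produces a measure slice by slice) and the joint convergence of $\mu_\eps$ on $\Omega_T$: one must check that the family $\{\mu^t\}$ obtained via Helly coincides with the disintegration of $\mu$, and that $t\mapsto\mu^t(\psi)$ inherits the BV regularity uniformly in $\psi$. Both are handled by the density argument together with the tensor-test identification above; once in place, the lower bound is a routine consequence of Modica--Mortola applied slicewise.
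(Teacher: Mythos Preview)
Your proposal is correct and follows essentially the same route as the paper's proof: uniform mass bound for compactness of $\mu_\eps$, the BV-in-time estimate of Lemma~\ref{lem:bv-mu} together with a diagonal argument over a countable $C^1$-dense family to obtain slice limits, identification of $\mu^t$ via density, dominated convergence for the disintegration \eqref{eq:disint-mu}, and the Modica--Mortola lower bound slicewise for \eqref{eq:spt-mu}. The one minor difference is that you invoke Helly's selection theorem directly to obtain pointwise convergence of $t\mapsto\mu_\eps^t(\psi_k)$ for \emph{every} $t\in[0,T]$ in one stroke, whereas the paper first obtains convergence for almost every $t$, then extends to the co-countable set of common continuity points of the $m_i$, and finally passes to a further subsequence to capture the remaining (countably many) jump times; your packaging is slightly cleaner but not a different idea.
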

\begin{proof}\cite[Proposition 4.2]{MuR08}
By \eqref{ass:bound-mu} we see that $\mu_\eps$ is uniformly bounded.
Therefore we can select a subsequence $\eps\to 0$ such that
$\mu_\eps\to\mu$ for a Radon-measures $\mu$ on ${\Omega_T}$. 
Choose next a countable family $(\psi_i)_{i\in\N}\subset
C^1(\overline{\Omega})$ which is dense in $C^0(\overline{\Omega})$. By Lemma
\ref{lem:bv-mu} and a diagonal-sequence argument there exists a
subsequence $\eps\to 0$ such that
for all $i\in\N$ and almost all $t\in (0,T)$
\begin{gather}
 m_i(t)\,:=\, \lim_{\eps\to 0}\mu_\eps^t(\psi_i)\text{ exists
   and}\label{eq:conv-mi}\\
 t\mapsto m_i(t)\text{ is of bounded variation on
   }(0,T). \label{eq:conv-mu-i}
\end{gather}
Next it is possible to extend \eqref{eq:conv-mi} to the co-countable subset of
$[0,T]$ where none of the functions $m_i$ has a jump. In a second step
we can pass to a subsequence such that \eqref{eq:conv-mi} holds for all
$t\in [0,T]$.
Taking now an arbitrary $t\in [0,T]$
by \eqref{ass:bound-mu} there exists a subsequence $\eps\to 0$ such that
\begin{gather}
  \mu^t \,:=\, \lim_{\eps\to 0} \mu_\eps^t\quad \text{ exists.}
  \label{eq:conv-mu-eps-t} 
\end{gather}
Hence $\mu^t(\psi_i)=m_i(t)$ and since $(\psi_i)_{i\in\N}$ is
dense in $C^0(\overline{\Omega})$ we can identify all limit points of
$(\mu_\eps^t)_{\eps>0}$ and obtain \eqref{eq:conv-mu-eps-t} for the
whole sequence above and for all
$t\in [0,T]$, which proves \eqref{eq:conv-mu-t}. Moreover, by
\cite{Modi87} we have 
\begin{align*}
  \lim_{\eps\to 0} \mu_\eps^t\,\geq\, \limsup_{\eps\to 0} |\nabla
  G(u_\eps(t,\cdot))|\,\geq\, |\nabla G(u(t,\cdot))|\,=\,
  \frac{c_0}{2}|\nabla u(t,\cdot)|.
\end{align*}

By the Dominated Convergence Theorem we conclude that for any $\zeta\in
C^0(\overline{\Omega_T})$ 
\begin{gather*}
  \int_{\Omega_T} \zeta\,d\mu\,=\, \lim_{\eps\to 0}
  \int_{\Omega_T} \zeta\,d\mu_\eps \,=\,  \lim_{\eps\to 0}\int_0^T
  \int_\Omega \zeta(t,x)\,d\mu_\eps^t(x)\,dt\,=\, \int_0^T \int_\Omega
  \zeta(t,x)\,d\mu^t(x)\,dt,
\end{gather*}
which implies that   $\mu$ decomposes as in \eqref{eq:disint-mu}.
\end{proof}
%
\subsection{Integrality of the limit measures}
One key result is that the limits $\mu^t$ of the
diffuse surface area
measures are not just Radon measures but geometric objects in the sense
of being, up to a
constant, integer-rectifiable with a weak mean curvature $H(t)\in
L^2(\mu^t)$. To derive this property we crucially use a compactness
property and lower bound proved in \cite{RSc06}.
\begin{proposition}\label{prop:lsc-H}
For almost all $t\in (0,T)$ 
\begin{align*}
  &\frac{1}{c_0}\mu^t \text{ is an integral }(n-1)\text{-varifold},\\
  &\mu^t\text{ has weak mean curvature }{H}(t,\cdot)\in 
  L^2(\mu^t,\R^n).
\end{align*}
Moreover,
\begin{gather}
  \int_{\Omega_T} |H|^2\,d\mu\,\leq\, \liminf_{\eps\to 0}
  \int_{\Omega_T} \frac{1}{\eps}w_\eps^2\,dx dt\,<\,C(\Lambda,\Lambda_0)
  \label{eq:L2-H} 
\end{gather}
and for the Radon measures $\tilde{\mu}_\eps$ defined
in  \eqref{eq:def-tilde-mu} we obtain that
\begin{align}
  \tilde{\mu}_\eps\,\,&\to\, \mu\quad\text{ as Radon measures on
  }\Omega_T. \label{eq:conv-tilde-mu}
\end{align}
Finally we deduce that for all $\eta\in C^0_c(\Omega,\Rn)$
\begin{gather}
  \lim_{\eps\to 0}\int_{\Omega_T} \eta(t,x)\cdot
  w_\eps(t,x) \nabla u_\eps(t,x)\,dx dt \,=\, \int_{\Omega_T} \eta(t,x)\cdot
  H(t,x)\,d\mu(t,x). \label{eq:conv-H}
\end{gather}
\end{proposition}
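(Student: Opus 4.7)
The plan is to combine a slicewise application of the Röger--Schätzle compactness/lower bound theorem with an equi--partition of energy argument and a first variation computation.

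First, I would derive the integrality and $L^2$ curvature bound. From \eqref{ass:bound-squares} we have a uniform bound
\[
   \int_0^T \W_\eps(u_\eps(t,\cdot))\,dt \,=\, \int_0^T\int_\Omega \tfrac{1}{\eps}w_\eps^2\,dx\,dt \,\leq\, \Lambda+2\Lambda_0,
\]
so by Fatou's lemma, for almost every $t\in(0,T)$ we can select a (possibly $t$-dependent) subsequence along which $\W_\eps(u_\eps(t,\cdot))$ stays bounded. Combined with the uniform bound \eqref{ass:bound-mu} on $E_\eps(u_\eps(t,\cdot))$, the main theorem of \cite{RSc06} (valid precisely for $n=2,3$) applies and yields that $\mu^t/c_0$ is an integral $(n-1)$-varifold with weak mean curvature $H(t,\cdot)\in L^2(\mu^t,\R^n)$ satisfying the slicewise lower bound $\int_\Omega |H(t,\cdot)|^2 d\mu^t \leq \liminf_\eps \W_\eps(u_\eps(t,\cdot))$. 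Since the limit $\mu^t$ is identified by Proposition \ref{prop:KRT} along the single diagonal subsequence, no ambiguity in subsequence selection arises. Integrating the slicewise bound against $\LL^1$ in time and applying Fatou's lemma on $(0,T)$ yields \eqref{eq:L2-H}.

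Second, I would establish the equi--partition of energy \eqref{eq:conv-tilde-mu}. The discrepancy measure $\xi_\eps := (\tfrac{\eps}{2}|\nabla u_\eps|^2 - \tfrac{1}{\eps}W(u_\eps))\LL^{n+1}$ satisfies $\tilde\mu_\eps = \mu_\eps + \xi_\eps$, so it suffices to show $|\xi_\eps| \to 0$ as Radon measures on $\Omega_T$. A uniform bound on $E_\eps(u_\eps(t,\cdot))$ and on $\W_\eps(u_\eps(t,\cdot))$ forces the positive part of the discrepancy to vanish in the limit, a standard consequence of the Modica--Mortola/Röger--Schätzle analysis (see \cite[Proposition 4.4]{MuR08}); integrated in time using dominated convergence this yields \eqref{eq:conv-tilde-mu}.

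Third, I would prove \eqref{eq:conv-H} by a first variation computation. For $\eta\in C^1_c(\Omega_T,\R^n)$, integrating by parts using the identity $-\eps\Delta u_\eps \nabla u_\eps = -\eps\,\dive(\nabla u_\eps\otimes \nabla u_\eps) + \tfrac{\eps}{2}\nabla|\nabla u_\eps|^2$ together with $\tfrac{1}{\eps}W'(u_\eps)\nabla u_\eps = \tfrac{1}{\eps}\nabla W(u_\eps)$ gives
\[
   \int_{\Omega_T}\eta\cdot w_\eps\nabla u_\eps\,dx\,dt \,=\, \int_{\Omega_T}(\nabla\eta : \nu_\eps\otimes\nu_\eps)\,d\tilde\mu_\eps - \int_{\Omega_T}\dive\eta\,d\mu_\eps.
\]
Passing to the limit via \eqref{eq:conv-mu}, \eqref{eq:conv-tilde-mu}, and the identification of $\nu_\eps\otimes\nu_\eps$ with the normal projection of $T_x\mu$ (which is where the integrality from Step~1 together with the equi--partition from Step~2 feeds in, as in \cite[Proposition 4.3]{MuR08}), the right-hand side converges to $-\int_{\Omega_T}\dive_{T_x\mu}\eta\,d\mu = -\delta\mu(\eta) = \int_{\Omega_T}\eta\cdot H\,d\mu$. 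The extension from $C^1_c$ to $C^0_c$ test fields follows from the uniform $L^1$ bound on $w_\eps\nabla u_\eps$ coming from \eqref{ass:bound-squares} and \eqref{ass:bound-mu}.

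The hardest step conceptually is the first, since the integrality and $L^2$ mean curvature bound genuinely require the diffuse Allard-type compactness of \cite{RSc06} and this is where the dimensional restriction $n=2,3$ enters. The main technical subtlety in step three is the passage of $\nu_\eps\otimes\nu_\eps\,d\tilde\mu_\eps$ to the tangential projection of the limit varifold: this identification is not formal and rests essentially on the rectifiability from Step~1 combined with the equi--partition from Step~2.
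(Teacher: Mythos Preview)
Your proposal is correct and follows essentially the same route as the paper: slicewise application of \cite{RSc06} via Fatou to obtain integrality and the $L^2$ mean curvature bound, vanishing of the discrepancy to get $\tilde\mu_\eps\to\mu$, and the first variation identity for \eqref{eq:conv-H}. The only technical point you understate is the passage from slicewise convergence (valid only along $t$-dependent subsequences) to full space--time convergence in Steps~2 and~3; ordinary dominated convergence does not suffice here, and the paper invokes a refined Lebesgue-type argument \cite[Proposition~6.1]{MuR08} at precisely this juncture.
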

\begin{proof}
By \eqref{ass:bound-squares}, \eqref{ass:bound-mu} and Fatou's Lemma 
\begin{gather}\label{eq:bound-fatou}
\liminf_{\eps\to 0} \Big( \mu^t_\eps(\Omega) + \frac{1}{\eps}\int_\Omega
w^2_\eps(x,t)\,dx\Big)<+\infty,
\end{gather}
holds for $\mathcal L^1$-a.e. $t\in (0,T)$. For every $t\in (0,T)$ such
that \eqref{eq:bound-fatou} hold we can apply  
\cite[Theorems 4.1 and 5.1, Proposition 4.9]{RSc06}. Hence there exists a subsequence
$\eps_i\to 0\, (i\to\infty)$ (that may depend on $t$) such that
$\mu_{\eps_i}^t$ converge as Radon measures (by
\eqref{eq:conv-mu-t} the limit is given by $\mu^t$),
and such that
\begin{gather}
  \frac{1}{c_0}\mu^t\, \text{ is an integral varifold with weak mean
  curvature }H(t,\cdot)\in L^2(\mu^t), \notag\\
 \int_\Omega \vert H(t,x)\vert^2\,d\mu^t(x)\leq
 \liminf_{\eps\to 0}\frac{1}{\eps}\int_\Omega  w_\eps(t,x)^2\,dx. \label{eq:liminf-H}
\end{gather}
Moreover, for any subsequence $\eps_i\to 0\, (i\to\infty)$ such that
\eqref{eq:bound-fatou} is satisfied by \cite[Theorem 4.1, Proposition 4.9]{RSc06} we have
\begin{gather}
 V_{\eps_i}^t \,\overset{i\to\infty}{\to}\, \mu^t\quad  \text{ as varifolds on }\Omega,
 \\
 \frac{\eps_i}{2}|\nabla u_{\eps_i}(t,\cdot)|^2
  -\frac{1}{\eps_i}W(u_{\eps_i}(t,\cdot))\,\to\, 0\quad\text{ in
 }L^1(\Omega), \label{eq:conv-xi-t}
\end{gather}
and by \cite[Proposition 4.10]{RSc06} for any $\psi\in C^1_c(\Omega)$
\begin{align}
  \lim_{i\to\infty} \int_\Omega \psi(x)\cdot w_{\eps_i}(t,x)\nabla
  u_{\eps_i}(t,x)\,dx 
  \,=&\, -\lim_{i\to\infty} \delta V_{\eps_i}^t(\psi)\,
   \label{eq:conv-xi-t-2}
  \\
  =&\, -\delta\mu^t(\psi)
  \,=\, \int_\Omega \psi(x)\cdot H(t,x)\,d\mu^t(x).
  \notag
\end{align}
By a refined version of Lebesgue's Dominated Convergence Theorem it
follows \cite[Proposition 6.1]{MuR08} that
\begin{align}
  \frac{\eps}{2}|\nabla u_{\eps}|^2
  -\frac{1}{\eps}W(u_{\eps})\,&\to\, 0\quad\text{ in
 }L^1(\Omega_T), \label{eq:conv-xi}\\
 \int_{\Omega_T} \eta\cdot w_{\eps}\nabla  u_{\eps}\,dx dt 
  \,&\to\, \int_{\Omega_T} \psi\cdot H\,d\mu. \label{eq:conv-H-rep} 
\end{align}
In particular we deduce \eqref{eq:conv-tilde-mu} from \eqref{eq:conv-xi}
and \eqref{eq:conv-mu} since 
\begin{gather*}
  \lim_{\eps\to 0} \tilde{\mu}_\eps\,=\, \lim_{\eps\to 0} \Big(\mu_\eps +
  \Big(\frac{\eps}{2}|\nabla u_{\eps}|^2 
  -\frac{1}{\eps}W(u_{\eps})\Big)\LL^{n+1}\Big)\,=\, \mu.
\end{gather*}
Finally \eqref{eq:L2-H} follows from \eqref{eq:liminf-H}, Fatous Lemma,
and \eqref{ass:bound-squares}. 
\end{proof}
%
\subsection{Existence of a generalized velocity}
\begin{lemma}
\label{lem:velo}
There exists a function $v\in L^2(\mu,\Rn)$ such that
\begin{gather}
  \lim_{\eps\to 0}\int_{\Omega_T} -\eta(t,x)\cdot
  \eps\partial_t u_\eps(t,x)\nabla u_\eps(t,x)\,dx dt \,=\, \int_{\Omega_T} \eta(t,x)\cdot
  v(t,x)\,d\mu(t,x) \label{eq:conv-v}
\end{gather}
for all $\eta\in C^0_c(\Omega_T,\Rn)$, and such that
\begin{gather}
  \int_{\Omega_T} |v|^2\,d\mu\,\leq\, \liminf_{\eps\to 0}\int_{\Omega_T}
  \eps (\partial_t u_\eps)^2\,dx dt. \label{eq:est-v}
\end{gather}
Moreover,
$(c_0^{-1}\mu^t)_{t\in (0,T)}$ is an $L^2$-flow with generalized velocity $v$ in
the sense of Definition \ref{def:gen-velo}.
\end{lemma}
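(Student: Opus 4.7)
My plan is to view $\vec{\sigma}_\eps := -\eps\,\partial_t u_\eps\,\nabla u_\eps\,\LL^{n+1}$ as a vector-valued Radon measure on $\Omega_T$, extract a weak-$*$ limit, and identify it with $v\mu$. Cauchy--Schwarz in space--time gives, for every $\eta\in C^0_c(\Omega_T,\R^n)$,
\[
  \Bigl|\int_{\Omega_T}\eta\cdot d\vec\sigma_\eps\Bigr|
  \,\le\,\Bigl(\int_{\Omega_T}\eps(\partial_t u_\eps)^2\,dx\,dt\Bigr)^{1/2}
        \Bigl(\int_{\Omega_T}|\eta|^2\,d\tilde\mu_\eps\Bigr)^{1/2},
\]
so by \eqref{ass:bound-squares} the total variations $|\vec\sigma_\eps|(\Omega_T)$ are uniformly bounded and a further subsequence converges weakly-$*$ to some vector measure $\vec\sigma$. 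Passing to $\liminf$ and using $\tilde\mu_\eps\to\mu$ from Proposition \ref{prop:lsc-H} gives
\[
  \Bigl|\int_{\Omega_T}\eta\cdot d\vec\sigma\Bigr|
  \,\le\,\Bigl(\liminf_{\eps\to 0}\int_{\Omega_T}\eps(\partial_t u_\eps)^2\,dx\,dt\Bigr)^{1/2}\|\eta\|_{L^2(\mu)},
\]
so Riesz representation in $L^2(\mu,\R^n)$ supplies $v\in L^2(\mu,\R^n)$ with $\vec\sigma=v\mu$, proving \eqref{eq:conv-v} and \eqref{eq:est-v}.

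For the $L^2$-flow inequality \eqref{eq:def_vel_L2_flow} I would take $\eta\in C^1_c((0,T)\times\Omega)$ and differentiate $\mu_\eps^t(\eta(t,\cdot))$ in time. Using the equation \eqref{eq:p-AC}, the identity $w_\eps=g_\eps-\eps\partial_t u_\eps$, the Neumann condition \eqref{eq:bdry}, and integration by parts in space,
\[
  \tfrac{d}{dt}\mu_\eps^t(\eta(t,\cdot))
  \,=\,\mu_\eps^t(\partial_t\eta) + \int_\Omega\nabla\eta\cdot d\vec\sigma_\eps^t
       -\int_\Omega\eta\,\eps(\partial_t u_\eps)^2 + \int_\Omega\eta\,\partial_t u_\eps\,g_\eps.
\]
Integrating in $t$ kills the left-hand side by the compact time support of $\eta$, leaving
\[
  \int_{\Omega_T}\partial_t\eta\,d\mu_\eps + \int_{\Omega_T}\nabla\eta\cdot d\vec\sigma_\eps
  \,=\,\int_{\Omega_T}\eta\,\eps(\partial_t u_\eps)^2 - \int_{\Omega_T}\eta\,\partial_t u_\eps\,g_\eps,
\]
whose right-hand side is bounded in absolute value by $C(\Lambda,\Lambda_0)\|\eta\|_{C^0}$ via Cauchy--Schwarz combined with \eqref{ass:bound-squares} and \eqref{eq:ass-g}. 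Passing to the limit with \eqref{eq:conv-mu} and \eqref{eq:conv-v} produces the same bound on $\int(\partial_t\eta+\nabla\eta\cdot v)\,d\mu$, which is \eqref{eq:def_vel_L2_flow} for $\mu$; rescaling the constant by $c_0^{-1}$ gives the $L^2$-flow property for $c_0^{-1}\mu^t$.

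It remains to verify the perpendicularity $v(t,x)\perp T_x\mu^t$. Because $\vec\sigma_\eps$ points in the direction $\nu_\eps$ and $\nu_\eps^\perp$ is precisely the tangent plane of the diffuse varifold $V_\eps^t$, testing $\vec\sigma_\eps$ against the continuous Grassmannian-valued field $(t,x,S)\mapsto\pi_S(\eta(t,x))$ gives identically zero for every $\eta\in C^0_c(\Omega_T,\R^n)$. The hard part is that the tangential projection $\pi_{\nu_\eps^\perp}(\eta)$ depends on $\eps$, so weak-$*$ convergence of $\vec\sigma_\eps$ alone is not enough to pass to the limit. The plan is to combine the varifold convergence $V_\eps^t\to\mu^t$ from Proposition \ref{prop:lsc-H}, which allows passage to the limit in integrals depending on the $\eps$-tangent plane, with the Cauchy--Schwarz estimate of the first paragraph, which controls $\vec\sigma_\eps$-integrals by their $L^2(\tilde\mu_\eps)$-norm and hence accommodates test fields with $\eps$-dependence. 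The outcome is $\int\pi_{T_x\mu^t}(\eta)\cdot v\,d\mu=0$ for every admissible $\eta$, which is equivalent to $v\perp T_x\mu^t$ $\mu$-almost everywhere.
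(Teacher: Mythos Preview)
Your approach is essentially the paper's. Your first step (vector measure plus Cauchy--Schwarz plus Riesz representation) is precisely Hutchinson's measure--function pair compactness \cite{Hutc86}, which is what the paper invokes after defining $v_\eps:=-(\partial_t u_\eps/|\nabla u_\eps|)\nu_\eps$; your energy identity for the $L^2$-flow inequality is algebraically equivalent to the paper's \eqref{eq:KRT-rep}. For perpendicularity the paper follows the same outline you sketch but names the mechanism more sharply: the varifold convergence $V_\eps^t\to\mu^t$ upgrades, via an argument of Moser \cite{Mose01} (see \cite[Lemma~6.3]{MuR08}), to \emph{strong} convergence of the measure--function pair $(\tilde\mu_\eps,P_\eps)$ with $P_\eps=Id-\nu_\eps\otimes\nu_\eps$, and it is this strong convergence that lets one pass to the limit in the product $P_\eps v_\eps$ against the merely weakly convergent pair $(\tilde\mu_\eps,v_\eps)$. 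Your Cauchy--Schwarz remark gestures at this, but the step from varifold convergence to strong measure--function pair convergence of the projection is the one non-obvious ingredient you should make explicit rather than leave implicit.
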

\begin{proof}
We prove the lemma in three steps.

\noindent \textit{Step 1.} We define approximate velocity vectors $v_\eps:\Omega_T\to\Rn$ by
\begin{gather}
  v_\eps\,:=\,
  \begin{cases}
    -\frac{\partial_t u_\eps}{|\nabla u_\eps|}\frac{\nabla
      u_\eps}{|\nabla u_\eps|} &\text{ if }|\nabla u_\eps|\neq 0,\\[2mm]
    0&\text{ otherwise. }
  \end{cases}
  \label{eq:def-v-eps}
\end{gather} 
and deduce from \eqref{ass:bound-squares} that
\begin{gather}
  \int_{\Omega_T} |v_\eps|^2\,d\tilde{\mu}_\eps \,\leq\, \int_{\Omega_T}
  \eps (\partial_t u_\eps)^2\,dxdt\,\leq\,
  \Lambda + 2\Lambda_0.
  \label{eq:bound-v-eps} 
\end{gather}
Therefore $(\tilde{\mu}_\eps, v_\eps)$ is a measure-function
pair in the sense of \cite{Hutc86}. By \eqref{eq:conv-tilde-mu} and
\cite[Theorem 4.4.2]{Hutc86} we deduce that there 
exists a subsequence $\eps\to 0$ and a function $v\in L^2(\mu,\Rn)$ verifying
\begin{gather*}
-\lim_{\eps\to 0}  \int_{\Omega_T}\eta \cdot \eps\partial_t u_\eps\nabla u_\eps\,dxdt=
\lim_{\eps\to 0}  \int_{\Omega_T} \eta \cdot v_\eps\,d\tilde{\mu}_\eps
=\int_{\Omega_T} \eta\cdot v \,d\mu,
\end{gather*}
for every $\eta\in C^0_c(\Omega_T,\R^n)$, which shows \eqref{eq:est-v}.

\noindent \textit{Step 2.} We claim that $(c_0^{-1}\mu^t)_{t\in(0,T)}$
is an $L^2$-flow with generalized velocity $v$. First we have to verify
\eqref{eq:def_vel_perp}.  With this aim let
\begin{gather*}
  P_\eps\,:=\, Id - \nu_\eps\otimes\nu_\eps.
\end{gather*}
Moreover denote by $P(t,x):\Rn\to T_x\mu^t\subset\R^n$ the orthogonal projection onto
$T_x\mu^t$ whenever this tangential plane exists, and set $P(t,x)$ to be the orthogonal
projection onto $\vec{e}_1^\perp\subset\R^n$ otherwise.
Equation \eqref{eq:def_vel_perp} follows from the identity
\begin{align}
  \int_{\Omega_T} \eta(t,x)\cdot P(t,x)v(t,x)\,d\mu(t,x)
  =\,&\lim_{\eps\to 0}\int_{\Omega_T} \eta(t,x)\cdot
  P_\eps(t,x)v_\eps(t,x)\,d\tilde{\mu}_\eps(t,x)\,=\, 0, \label{eq:conv-moser}
\end{align}
which holds for all $\eta\in C^0_c(\Omega_T)$.
The proof of this statement uses arguments from \cite[Proposition
3.2]{Mose01}. By the varifold convergence of 
$V^t_\eps$ and a Lebesgue-type argument one first obtains strong convergence
(in the sense of \cite{Hutc86}) for the measure function pair
$(\tilde{\mu}_\eps,P_\eps)$. Furthermore,
by \eqref{eq:conv-v} we have weak convergence for the measure function pair
$(\tilde{\mu}_\eps,v_\eps)$ and \eqref{eq:conv-moser} follows, see 
\cite[Lemma 6.3]{MuR08} for details.
\smallskip

\noindent \textit{Step 3.} In order to conclude we have to prove that
\eqref{eq:def_vel_L2_flow} holds.  
By similar calculation as in \eqref{eq:KRT-1} we compute that for any
$\eta\in C^1_c(\Omega_T)$,
\begin{align}
  2\partial_t\mu_\eps^t(\eta(t,\cdot)) \,
  =\, &\int_\Omega \frac{1}{\eps}g_\eps(t,x)^2 \eta(t,x)\, dx - \int_{\Omega}
  \big(\eps(\partial_t u_\eps)^2 
  +\frac{1}{\eps}w_\eps^2\big)(t,x)\eta(t,x) \,dx\notag\\
  &  + 2\int_{\Omega} \partial_t \eta(t,x)\,d\mu_\eps^t(x) -2\int_\Omega
  \eps\nabla\eta(t,x)\cdot \partial_t u_\eps(t,x)\nabla 
  u_\eps(t,x)\, dx. \label{eq:KRT-rep} 
\end{align}
Integrating this equality in time and using \eqref{eq:ass-g},
\eqref{ass:bound-squares} we deduce that
\begin{align}
  &\Big| \int_{\Omega_T} \partial_t \eta(t,x)\,d\mu_\eps(t,x) -
  \int_{\Omega_T}\eps\nabla\eta(t,x)\cdot \partial_t u_\eps(t,x)\nabla
  u_\eps(t,x)\, dx dt \Big| \notag\\
  \leq\,& C(\Lambda,\Lambda_0)\|\eta\|_{C^0(\Omega_T)}.
\end{align}
Due to \eqref{eq:conv-mu} and \eqref{eq:conv-v} this implies
 \eqref{eq:def_vel_L2_flow}. 
\end{proof}
%
\subsection{Convergence to forced mean curvature flow}
\label{sec:proofs-prelim}
We are now
ready to pass to the limit $\eps\to 0$ in \eqref{eq:p-AC}.
\begin{proposition}\label{prop:conv-pAC}
There exists a subsequence $\eps\to 0$ and a function $g\in
L^2(\mu,\Rn)$ such that
\begin{gather}
  \lim_{\eps\to 0}\int_{\Omega_T} -\eta\cdot\nabla u_\eps
  g_\eps\,dx\,dt\,=\, \int_{\Omega_T} \eta\cdot g\,d\mu
  \label{eq:conv-g-rep}
\end{gather}
for all $\eta\in C^0_c(\Omega,\Rn)$ and
\begin{gather}
  \int_{\Omega_T} |g|^2\,d\mu\,\leq\, \liminf_{\eps\to 0}\int_{\Omega_T}
  \frac{1}{\eps} g_\eps^2(x,t)\,dx dt. \label{eq:est-g}
\end{gather}
Moreover $(\mu^t)_{t\in (0,T)}$ and $g$ satisfy
\begin{gather}
  \int_{\Omega_T} \eta\cdot v\,d\mu\,=\, \int_{\Omega_T} \eta\cdot
  (H+g)\,d\mu \label{eq:lim-AC-int}
\end{gather}
for all $\eta\in C^0_c(\Omega,\Rn)$, which implies \eqref{eq:lim-AC}.
Finally the energy inequality
\begin{gather}
  - \int_{\Omega_T} \partial_t \zeta \, d\mu \,\leq\,
  \int_{\Omega_T} v\cdot\nabla\zeta\,d\mu
  -\frac{1}{2}\int_{\Omega_T} (|v|^2 + |H|^2)\zeta \,d\mu \notag\\
  +\frac{1}{2}\|\zeta\|_{C^0(\Omega_T)}\liminf_{\eps\to 0}
  \int_{\Omega_T} \frac{1}{\eps}g_\eps^2\,dx dt. \label{eq:energy-ineq}
\end{gather}
holds for all $\zeta\in C^1_c(\Omega_T)$ with $\zeta\geq 0$.
\end{proposition}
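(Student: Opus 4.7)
The plan is to address the three assertions of the proposition in order, each time recycling the measure-function pair framework of Hutchinson together with the convergence statements already obtained in Proposition \ref{prop:KRT}, Proposition \ref{prop:lsc-H}, and Lemma \ref{lem:velo}.

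\textit{Step 1 (existence of $g$).} In analogy with the definition of $v_\eps$ in \eqref{eq:def-v-eps}, I would introduce the approximate forcing vector field
\begin{gather*}
\tilde g_\eps\,:=\,-\frac{g_\eps}{\eps|\nabla u_\eps|^2}\,\nabla u_\eps\quad\text{where }|\nabla u_\eps|>0,\qquad \tilde g_\eps:=0\text{ else.}
\end{gather*}
A direct computation gives $\int_{\Omega_T}|\tilde g_\eps|^2\,d\tilde{\mu}_\eps=\int_{\Omega_T}\frac{1}{\eps}g_\eps^2\,dxdt\leq\Lambda$, so $(\tilde{\mu}_\eps,\tilde g_\eps)$ is a uniformly bounded measure-function pair. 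Since $\tilde{\mu}_\eps\to\mu$ by \eqref{eq:conv-tilde-mu}, Hutchinson's compactness result \cite[Thm.~4.4.2]{Hutc86} supplies (up to a subsequence) a limit $g\in L^2(\mu,\Rn)$ with
\begin{gather*}
-\int_{\Omega_T}\eta\cdot\nabla u_\eps g_\eps\,dxdt\,=\,\int_{\Omega_T}\eta\cdot\tilde g_\eps\,d\tilde{\mu}_\eps\,\longrightarrow\,\int_{\Omega_T}\eta\cdot g\,d\mu,
\end{gather*}
for every $\eta\in C^0_c(\Omega_T,\Rn)$, which is \eqref{eq:conv-g-rep}. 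The bound \eqref{eq:est-g} is the usual lower semicontinuity of the $L^2$-norm under measure-function pair convergence.

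\textit{Step 2 (motion law).} Here I multiply the perturbed Allen--Cahn equation \eqref{eq:p-AC} by $\eta\cdot\nabla u_\eps$ and integrate over $\Omega_T$, obtaining
\begin{gather*}
\int_{\Omega_T}\eps\partial_t u_\eps\,\eta\cdot\nabla u_\eps\,dxdt\,=\,-\int_{\Omega_T}w_\eps\,\eta\cdot\nabla u_\eps\,dxdt\,+\,\int_{\Omega_T}g_\eps\,\eta\cdot\nabla u_\eps\,dxdt.
\end{gather*}
Passing to the limit using \eqref{eq:conv-v} on the left, \eqref{eq:conv-H} on the first term on the right, and \eqref{eq:conv-g-rep} on the second yields precisely \eqref{eq:lim-AC-int}. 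Since $C^0_c(\Omega_T,\Rn)$ is dense in $L^2(\mu,\Rn)$, the pointwise identity \eqref{eq:lim-AC} follows $\mu$-a.e.

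\textit{Step 3 (energy inequality).} The starting point is the identity \eqref{eq:KRT-rep} applied to a non-negative test function $\zeta\in C^1_c(\Omega_T)$. Integrating over $(0,T)$ and exploiting compact support in time to kill the boundary term produces, after rearrangement,
\begin{gather*}
-\int_{\Omega_T}\partial_t\zeta\,d\mu_\eps\,=\,\frac{1}{2}\int_{\Omega_T}\tfrac{1}{\eps}g_\eps^2\,\zeta\,dxdt\,-\,\frac{1}{2}\int_{\Omega_T}\bigl(\eps(\partial_t u_\eps)^2+\tfrac{1}{\eps}w_\eps^2\bigr)\zeta\,dxdt\\
\qquad\qquad +\,\int_{\Omega_T}(-\eps\partial_t u_\eps\nabla u_\eps)\cdot\nabla\zeta\,dxdt.
\end{gather*}
For the last term I apply \eqref{eq:conv-v} with $\eta=\nabla\zeta$, which converges to $\int_{\Omega_T}v\cdot\nabla\zeta\,d\mu$; the left-hand side converges to $-\int_{\Omega_T}\partial_t\zeta\,d\mu$ by \eqref{eq:conv-mu}; the first term on the right is crudely estimated by $\tfrac{1}{2}\|\zeta\|_{C^0}\int\tfrac{1}{\eps}g_\eps^2$. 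The dissipative terms are handled by lower semicontinuity: from the measure-function pair convergences established in Lemma \ref{lem:velo} and Proposition \ref{prop:lsc-H} (each in the localised form weighted by $\zeta\geq 0$) I get $\int|v|^2\zeta\,d\mu\leq\liminf\int\eps(\partial_t u_\eps)^2\zeta\,dxdt$ and $\int|H|^2\zeta\,d\mu\leq\liminf\int\tfrac{1}{\eps}w_\eps^2\zeta\,dxdt$. Combining all these terms with the appropriate sign using a $\liminf$ on the right-hand side delivers \eqref{eq:energy-ineq}.

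\textit{Expected difficulties.} The routine identifications in Steps 1 and 2 are straightforward once the measure-function pair machinery is in place; the main care is needed in Step 3, where one must simultaneously exploit a convergence (for the terms involving $\partial_t\zeta$ and $\nabla\zeta\cdot v$) and two lower semicontinuities (for $|v|^2$ and $|H|^2$) against the weight $\zeta\geq 0$. The non-negativity of $\zeta$ is crucial, since otherwise the $\liminf$ on the right-hand side would not control the mixed-sign contribution; the $\|\zeta\|_{C^0}\liminf$ bound on the forcing term is a rather lossy estimate that is nonetheless the natural consequence of having no better control on $g_\eps^2/\eps$ than the uniform $L^1$ bound \eqref{eq:ass-g}.
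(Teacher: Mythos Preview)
Your proposal is correct and follows essentially the same route as the paper's own proof: the same measure-function pair compactness via \cite{Hutc86} for the forcing (your $\tilde g_\eps$ coincides with the paper's $\vec g_\eps=-\frac{g_\eps}{\eps|\nabla u_\eps|}\nu_\eps$), the same multiplication of \eqref{eq:p-AC} by $\eta\cdot\nabla u_\eps$ for the motion law, and the same integration of \eqref{eq:KRT-rep} combined with weak convergence for the $\partial_t\zeta$, $\nabla\zeta$ terms and lower semicontinuity for the squared terms in the energy inequality. The only slip is the equality $\int_{\Omega_T}|\tilde g_\eps|^2\,d\tilde\mu_\eps=\int_{\Omega_T}\frac{1}{\eps}g_\eps^2\,dxdt$ in Step~1, which should be ``$\leq$'' (the set $\{\nabla u_\eps=0\}$ may carry nonzero $g_\eps$); this is harmless since only the upper bound is needed.
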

\begin{proof}
From \eqref{eq:ass-g} we deduce that
\begin{gather*}
  \vec{g}_\eps \,:=\, -\frac{g_\eps}{\eps|\nabla u_\eps|}\nu_\eps
\end{gather*}
satsifies for all $\eps>0$
\begin{gather*}
  \int_{\Omega_T} |\vec{g}_\eps|^2\,d\tilde{\mu}_\eps\,\leq\,
  \int_{\Omega_T} \frac{1}{\eps} g_\eps^2\,dx dt\,\leq\, \Lambda.
\end{gather*} 
Since $\tilde{\mu}_\eps\,\to\,\mu$ by \eqref{eq:conv-tilde-mu} we deduce
from \cite{Hutc86} the existence of a subsequence $\eps\to 0$ and of
$g\in L^2(\mu)$ such that \eqref{eq:est-g} and 
\begin{gather*}
  \lim_{\eps\to 0}\int_{\Omega_T}
  \eta\cdot\vec{g}_\eps\,d\tilde{\mu}_\eps\,=\, 
  \int_{\Omega_T} \eta\cdot g\,d\mu \quad\text{ for all }\eta\in C^0_c(\Omega_T)
\end{gather*}
holds. By the definition of
$\tilde{\mu}_\eps$ and $\vec{g}_\eps$ this is equivalent to
\eqref{eq:conv-g-rep}. 
Multiplying \eqref{eq:p-AC} with $-\eta\cdot \nabla u_\eps$ and
integrating yields
\begin{gather*}
  \int_{\Omega_T} -\eta\cdot \eps \partial_t u_\eps \nabla u_\eps \,dx
  dt\,=\, \int_{\Omega_T} \eta\cdot\nabla u_\eps w_\eps - \eta\cdot
  g_\eps\nabla u_\eps\, dx dt.
\end{gather*}
Using \eqref{eq:conv-H}, \eqref{eq:conv-v}, \eqref{eq:conv-g-rep} we 
can pass to the limit $\eps\to 0$ in this equation and arrive at \eqref{eq:lim-AC-int}.

To prove \eqref{eq:energy-ineq} we rewrite \eqref{eq:KRT-rep} and
integrate in time to obtain
\begin{align*}
  \int_{\Omega_T} -\partial_t\zeta\,d\mu_\eps 
  =\, & -\int_\Omega
  \eps\nabla\zeta(t,x)\cdot \partial_t u_\eps(t,x)\nabla 
  u_\eps(t,x)\, dx dt\\ &-\frac{1}{2}
  \int_{\Omega_T}\zeta \Big(\eps(\partial_t u_\eps )^2 +
  \frac{1}{\eps}w_\eps^2\Big)\,dx dt + 
  \int_{\Omega_T}\frac{1}{2\eps}g_\eps^2 \,\zeta\,dx dt.
\end{align*}
By \eqref{eq:conv-mu}, \eqref{eq:conv-v}, and \eqref{eq:L2-H},
\eqref{eq:est-v} we then deduce \eqref{eq:energy-ineq}.
\end{proof}
\section{Applications}
\label{sec:appl}
\subsection{Small perturbations of the Allen--Cahn equation and motion by mean curvature}
For `small perturbations' of the Allen--Cahn equation in the sense that
\begin{gather}
  \int_{\Omega_T} \frac{1}{\eps}g_\eps^2(t,x)\,dx dt\,\to\, 0\quad\text{ as
  }\eps\to 0 \label{eq:small-g}
\end{gather}
Theorem \ref{the:main} implies that we obtain motion by mean curvature
in the limit. This shows a stability of the convergence of
the Allen--Cahn equation to mean curvature flow. In fact, we obtain here
the convergence to an \emph{enhanced motion} in the sense of Ilmanen \cite{Ilma94}.
\begin{proposition}
Assume that \eqref{eq:p-AC}-\eqref{eq:bdry} hold, and that the
perturbations $(g_\eps)_{\eps>0}$ satisfy \eqref{eq:small-g}. Then the
conclusions of Theorem \ref{the:main} hold with $g=0$. Moreover,
$(\mu^t)_{t\in (0,T)}$ is a Brakke motion. If we in addition assume that
the initial data are well-prepared, in the sense that
\begin{gather}
  \lim_{\eps\to 0} \mu_\eps^0 \,\to\, \frac{c_0}{2}|\nabla u(0,\cdot)|,
  \label{eq:prep-id} 
\end{gather}
then the $L^2$-flow $(\frac{1}{c_0}\mu^t)_{t\in (0,T)}$ together with
the current associated to $\partial^*\{u= 1\}$ is an enhanced motion
in the sense of \cite{Ilma94}
with initial condition $\partial^*\{u(0,\cdot)=1\}$.
\end{proposition}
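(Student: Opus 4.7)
The plan is to first apply Theorem~\ref{the:main} directly, since \eqref{eq:small-g} is only a strengthening of \eqref{eq:ass-g}: it produces the limits $u$, $\mu$, $g$ and the $L^2$-flow $(c_0^{-1}\mu^t)_{t\in(0,T)}$. Combining the bound \eqref{eq:est-g} with \eqref{eq:small-g} yields
\[
 \int_{\Omega_T}|g|^2\,d\mu \,\leq\, \liminf_{\eps\to 0}\int_{\Omega_T}\tfrac{1}{\eps}g_\eps^2\,dx\,dt \,=\, 0,
\]
so $g=0$ in $L^2(\mu;\R^n)$ and the motion law \eqref{eq:lim-AC} reduces to $v=H$ $\mu$-almost everywhere.

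To prove that $(\mu^t)_{t\in(0,T)}$ is a Brakke motion I would exploit the energy inequality \eqref{eq:energy-ineq}. With $v=H$ and \eqref{eq:small-g} the last term in \eqref{eq:energy-ineq} vanishes and the inequality simplifies, for every $\zeta\in C^1_c(\Omega_T)$ with $\zeta\geq 0$, to the distributional Brakke-type bound
\[
 -\int_{\Omega_T}\partial_t\zeta\,d\mu \,\leq\, \int_{\Omega_T} H\cdot\nabla\zeta\,d\mu - \int_{\Omega_T}|H|^2\zeta\,d\mu.
\]
To recover the time-pointwise formulation $\overline D_t \mu^t(\psi)\leq \int\bigl(-|H|^2\psi + H\cdot\nabla\psi\bigr)\,d\mu^t$ for a.e.\ $t$, I would insert separable test functions $\zeta(t,x)=\phi(t)\psi(x)$ with $\phi\in C^1_c((0,T))$, $\phi\geq 0$ and $\psi\in C^1_c(\Omega)$, $\psi\geq 0$. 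Setting $M(t):=\mu^t(\psi)$ and $F(t):=\int(-|H|^2\psi + H\cdot\nabla\psi)\,d\mu^t$ the display above reads as the distributional inequality $M'\leq F\,\LL^1$; because $M$ is of bounded variation by \eqref{eq:mu-bv}, its singular part is a sum of non-positive jumps and its absolutely continuous part is bounded by $F$. Evaluating at Lebesgue points of $M$ then produces exactly Brakke's upper-derivative inequality. The remaining axioms, namely integrality of $c_0^{-1}\mu^t$ and the $L^2$ bound on $H(t,\cdot)$, are already supplied by Proposition~\ref{prop:lsc-H}.

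For the enhanced-motion statement I would rely on Remark~\ref{rem:undercurrent}. Since $u\in BV(\Omega_T;\{-1,1\})\cap L^\infty(0,T;BV(\Omega))$, the set $\{u=1\}$ defines an integer-multiplicity current in $\Omega_T$ whose spatial slices at a.e.\ $t$ are the currents associated to $\partial^*\{u(t,\cdot)=1\}$, with surface measures dominated by $c_0^{-1}\mu^t$ in view of \eqref{eq:spt-mu}. The compatibility between this current and the Brakke flow — that its normal velocity on each slice coincides with the generalized velocity $v$ of $(\mu^t)$ restricted to $\partial^*\{u(t,\cdot)=1\}$ — is precisely the identity displayed in Remark~\ref{rem:undercurrent}, where $V=v\cdot\frac{\nabla u}{|\nabla u|}$ is identified with the distributional time-derivative of $u$. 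Finally the well-preparedness hypothesis \eqref{eq:prep-id} forces $c_0^{-1}\mu^0=\tfrac{1}{2}|\nabla u(0,\cdot)|$, matching the initial trace of the current and supplying the initial condition $\partial^*\{u(0,\cdot)=1\}$ required in Ilmanen's framework.

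The main obstacle I expect is the passage from the space-time energy inequality to the time-pointwise Brakke inequality: each of its three terms has to behave well under the approximation $\phi\to\chi_{[t_1,t_2]}$, and the delicate one is $-\int|H|^2\zeta\,d\mu$, whose per-slice interpretation depends on Proposition~\ref{prop:lsc-H} being applicable at almost every time, so that $H(t,\cdot)\in L^2(\mu^t)$ makes sense slice-wise. Once this is secured the three Brakke axioms fall into place, and verifying Ilmanen's enhanced-motion axioms reduces to matching the current's slice-wise normal velocity with $v$ via Remark~\ref{rem:undercurrent}.
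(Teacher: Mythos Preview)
Your argument for $g=0$ and for the time-integrated Brakke inequality matches the paper's exactly, and your passage from the integrated to the pointwise Brakke inequality via separable test functions and the BV structure of $t\mapsto\mu^t(\psi)$ is precisely what the paper abbreviates by citing Evans--Spruck \cite[Theorem 7.1]{EvSp95}. So the first two parts are fine.

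The enhanced-motion part, however, has a gap. Ilmanen's definition in \cite{Ilma94} does \emph{not} include an explicit velocity-matching axiom between the spacetime current and the Brakke flow; the coupling is through the mass-domination $\mu_{T_t}\leq\mu^t$, the boundary/initial condition, and a continuity property of the current in time. What you are missing is exactly this last item: one needs a bound of the type
\[
 \frac{c_0}{2}\int_{(t,t+\tau)\times\Omega} d|\nabla' u| \,\leq\, C\,\tau^{1/2},
\]
i.e.\ a $C^{1/2}$ control on the mass of the spacetime current in time slabs. The paper obtains this from \cite[Proposition 8.2]{MuR08}, which provides a nonnegative $p\in L^2(\mu)$ with $\frac{c_0}{2}|\nabla' u|\leq p\,\mu$; then Cauchy--Schwarz together with the uniform bound $\sup_t\mu^t(\Omega)<\infty$ from \eqref{ass:bound-mu} yields the displayed estimate. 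Your reliance on Remark~\ref{rem:undercurrent} gives a correct statement about the normal velocity of the slices, but it neither appears among Ilmanen's axioms nor substitutes for the missing H\"older-type mass estimate. Once you add this ingredient, together with \eqref{eq:spt-mu} and \eqref{eq:prep-id} (which you already invoke), the verification of the enhanced-motion axioms goes through.
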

\begin{proof}
By \eqref{eq:small-g} we can apply Theorem \ref{the:main} and by
\eqref{eq:est-g} we obtain that $g=0$. From \eqref{eq:energy-ineq} and
\eqref{eq:lim-AC} we further conclude that
\begin{gather}
  - \int_{\Omega_T} \partial_t \zeta \, d\mu \,\leq\,
  \int_{\Omega_T} H\cdot\nabla\zeta\,d\mu
  -\int_{\Omega_T}\zeta\, |H|^2 \,d\mu \label{eq:Brakke}
\end{gather}
holds for all $\zeta\in C^1_c(\Omega_T)$ with $\zeta\geq 0$. This is a
time-integrated version of Brakkes inequality. Now one derives from
\eqref{eq:Brakke}, following Evans and Spruck \cite[Theorem
7.1]{EvSp95},  that $(\mu_t)_{t\in (0,T)}$ is a Brakke motion.

Further, we have proved in \cite[Proposition 8.2]{MuR08} that there
exists a nonnegative function $p\in L^2(\mu)$ such that
\begin{gather*}
  \frac{c_0}{2}|\nabla' u|\,\leq\, p\mu,
\end{gather*}
where $\nabla'= (\partial_t,\nabla_x)^T$ denotes the time-space gradient
in $\R\times\Rn$. This implies by \eqref{ass:bound-mu} that
\begin{gather}
  \int_{(t,t+\tau)\times \Omega} \frac{c_0}{2}\,d|\nabla' u|(t,x)
  \notag
  \\
  \leq\,
  \|p\|_{L^2(\mu)}\left(\int_{t}^{t+\tau}
  \mu^s(\Omega)\,ds\right)^{1/2}\,\leq\,
  \tau^{\frac{1}{2}}\|p\|_{L^2(\mu)}C(\Lambda,\Lambda_0,T). \label{eq:c-halb}
\end{gather}
By \eqref{eq:spt-mu}, \eqref{eq:prep-id}, \eqref{eq:c-halb}, and since
$(\mu^t)_{t\in (0,T)}$ moves by Brakke motion, we can conclude that
$(\mu^t)_{t\in (0,T)}$ and 
$\partial^*\{u=1\}$ constitute an enhanced motion with initial condition
$\partial^*\{u(0,\cdot)=1\}$. 
\end{proof}
\begin{remark}
For an enhanced motion Ilmanen \cite{Ilma94} proves consistency and
regularity results. In particular, the initial surface 
$\partial^*\{u^0=1\}$ can be perturbed to one whose evolution is smooth
$\Ha^{n}$-almost everywhere in $\Omega_T$.
\end{remark}
\begin{remark}
Actually the conclusions of Proposition \ref{prop:conv-pAC} still hold
under weaker assumptions on the perturbation $g_\eps$, namely it is
sufficient that  $(g_\eps)_{\eps>0}$ satisfies \eqref{eq:ass-g} and 
that \eqref{eq:conv-g} holds
with $g=0$. 
\end{remark}
\subsection{Equation with perturbed double-well potential and a drift term}
In this section we consider \eqref{eq:p-AC} with perturbations
of the form 
\begin{gather}
  g_\eps(t,x)\,=\,  \eps b_\eps(t,x)\cdot\nabla u_\eps(t,x)
  +f_\eps(t,x) \sqrt{2W(u_\eps(t,x))}   \label{eq:app1}
\end{gather}
with $b_\eps:\Omega_T\to \Rn$, $f_\eps:\Omega_T\to\R$ given.
Whereas the first term describes a drift, the term
$f_\eps(t,x) \sqrt{W(u_\eps(t,x))}$ may arise from a perturbation of the
double well potential. Kobayashi \cite{Koba93}, for instance, introduced such a term as
a thermodynamic driving force in a model for dendritic crystal
growth. He proposed a potential of the form
\begin{gather*}
  W_\eps(r,m)\,=\, W(r) + \Big(\frac{2}{3}(r+1)^3
  -2(r+1)^3\Big) m,
\end{gather*}
with $m=\eps f$, where $f$ may be a function depending on other
quantities such as the temperature. This gives 
\begin{gather*}
  \partial_r W_\eps(r,m)\,=\, W'(r) + 2(r^2-1)m
  \,=\, W'(r) + 4 m\sqrt{W(r)}.
\end{gather*}
and yields in \eqref{eq:p-AC} a pertubative term
\begin{gather*}
  g_\eps \,=\, 4 f \sqrt{W(u_\eps)}.
\end{gather*}
Barles and Soner \cite{BaSo98} and Barles, Soner, and Souganidis
\cite{BaSS93} considered phase field
models of Allen--Cahn type  with a perturbation of the form
\eqref{eq:app1}. They proved the 
convergence to forced mean curvature flow in a viscosity solutions
formulations under 
the assumption that $b_\eps=b_\eps(x)$ and $f_\eps=f_\eps(t,x)$ are
uniformly Lipschitz 
continuous in time and space.

Our Theorem \ref{the:main} covers this situation under weaker assumptions on the
regularity of the forcing term.
\begin{proposition}\label{prop:app1}
Consider \eqref{eq:p-AC} with a perturbation of the form \eqref{eq:app1}
and assume that there exists $\Lambda_1>0$ independent of $\eps>0$ such
that 
\begin{gather}
  \int_0^T \sup_{x\in \Omega} \Big(|f_\eps(t,x)|^2 +
  |b_\eps(t,x)|^2\Big)\,dt \,\leq\, \Lambda_1. \label{eq:ass-app1}
\end{gather}
Then the conclusions of Theorem \ref{the:main} hold.
Moreover, we obtain that the limiting forcing term is given by 
\begin{gather}
g(t,x)= -(P(t,x)-Id)b-f(t,x),
\label{eq:lim-drift}
\end{gather}
where $P(t,x):\Rn\to T_x\mu^t$ denotes the orthogonal projection onto the tangential plane 
$T_x\mu^t$ of $\mu^t$ and $b$ and $f$ are determined by
\begin{align}
  \int_{\Omega_T} b\cdot\eta\,d\mu\,&=\, 
  \lim_{\eps\to 0}\int_{\Omega_T} b_\eps\cdot\eta\, \eps|\nabla u_\eps|
  ^2 dx dt, \label{eq:drift}\\
  \int_{\Omega_T} f\cdot\eta\,d\mu\,&=\, 
  \lim_{\eps\to 0}\int_{\Omega_T} f_\eps \sqrt{2W(u_\eps)}\nabla u_\eps\cdot\eta\,dx dt. \label{eq:lim-f}
\end{align}
Finally, in the case that $b_\eps,\, f_\eps$ are continuous 
and converge as $\eps\to 0$  uniformly in $\Omega_T$ to
$\widehat b$   and $\widehat f$, then $b=\widehat b$ on $\spt(\mu)$ and
$f\,\mu= \frac{c_0}{2}\widehat{f}\nabla u$.
\end{proposition}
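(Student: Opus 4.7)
The plan has three parts: verify that \eqref{eq:ass-g} holds so that Theorem \ref{the:main} applies, then pass to the limit separately in the drift and double-well contributions to identify $g$, and finally specialize to the uniform convergence case to identify $b$ and $f$.

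For the first part, a pointwise Young-type inequality gives
\begin{gather*}
\frac{1}{\eps}g_\eps^2 \,\leq\, 4\bigl(|b_\eps|^2+|f_\eps|^2\bigr)\Bigl(\frac{\eps}{2}|\nabla u_\eps|^2+\frac{1}{\eps}W(u_\eps)\Bigr).
\end{gather*}
Testing \eqref{eq:p-AC} with $\partial_t u_\eps$ and using the boundary condition \eqref{eq:bdry} produces the energy identity
\begin{gather*}
\mu_\eps^t(\Omega) + \frac{1}{2}\int_0^t\int_\Omega\Bigl(\eps(\partial_t u_\eps)^2+\frac{1}{\eps}w_\eps^2\Bigr)\,dx\,ds \,=\, \mu_\eps^0(\Omega) + \frac{1}{2}\int_0^t\int_\Omega\frac{1}{\eps}g_\eps^2\,dx\,ds.
\end{gather*}
Combining the two and applying Gronwall's lemma to $s\mapsto \mu_\eps^s(\Omega)$ with the assumption \eqref{eq:ass-app1} yields a uniform bound $\sup_\eps\sup_{t\in[0,T]}\mu_\eps^t(\Omega)\leq\Lambda_0 e^{2\Lambda_1}$, which in turn produces $\sup_\eps\int_{\Omega_T}\eps^{-1}g_\eps^2\,dx\,dt\leq C(\Lambda_0,\Lambda_1)$. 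Thus \eqref{eq:ass-g} is satisfied and Theorem \ref{the:main} supplies the objects $\mu$, $H$, $v$, and $g$.

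For the second part, I split the left-hand side of \eqref{eq:conv-g} into
\begin{gather*}
-\int_{\Omega_T}\eta\cdot\nabla u_\eps g_\eps\,dx\,dt \,=\, -\int_{\Omega_T}\eta\cdot(Id-P_\eps)b_\eps\,d\tilde\mu_\eps -\int_{\Omega_T}f_\eps\sqrt{2W(u_\eps)}\,\eta\cdot\nabla u_\eps\,dx\,dt,
\end{gather*}
using the identity $\nabla u_\eps\otimes\nabla u_\eps = |\nabla u_\eps|^2(Id-P_\eps)$ with $P_\eps = Id-\nu_\eps\otimes\nu_\eps$. For the drift integral, the strong convergence of the measure-function pair $(\tilde\mu_\eps,P_\eps)\to(\mu,P)$ established in Step~2 of the proof of Lemma \ref{lem:velo}, combined with the weak convergence of $(\tilde\mu_\eps,b_\eps)$ to $(\mu,b)$ obtained from Hutchinson's compactness \cite{Hutc86} (whose hypotheses follow from \eqref{eq:ass-app1} together with the uniform mass bound on $\tilde\mu_\eps$), passes to the limit and yields the identification \eqref{eq:drift}. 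The double-well integral converges by a further application of \cite{Hutc86} to $(\LL^{n+1},f_\eps\sqrt{2W(u_\eps)}\nabla u_\eps)$, defining the vector measure $f\,\mu$ via \eqref{eq:lim-f}. Substituting both limits into \eqref{eq:conv-g} yields the representation \eqref{eq:lim-drift}.

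For the third part, assume $b_\eps\to\widehat b$ and $f_\eps\to\widehat f$ uniformly on $\Omega_T$. A three-term splitting of $\int b_\eps\cdot\eta\,d\tilde\mu_\eps$ combined with the uniform mass bound on $\tilde\mu_\eps$ and the weak convergence $\tilde\mu_\eps\to\mu$ shows $\int b_\eps\cdot\eta\,d\tilde\mu_\eps\to\int\widehat b\cdot\eta\,d\mu$, so by \eqref{eq:drift} we conclude $b=\widehat b$ $\mu$-almost everywhere, in particular on $\spt(\mu)$. For $f$, I write $f_\eps\sqrt{2W(u_\eps)}\nabla u_\eps = f_\eps\nabla G(u_\eps)$ where $G$ is defined in \eqref{eq:def-H}. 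The Modica--Mortola convergence $G(u_\eps)\to \frac{c_0}{2}u$ in $L^1(\Omega_T)$ used in the proof of Proposition \ref{prop:MM}, together with the uniform $L^1$-bound on $\nabla G(u_\eps)$ inherited from the energy bound, yields $\nabla G(u_\eps)\,\LL^{n+1}\to \frac{c_0}{2}\nabla u$ as vector-valued Radon measures; uniform convergence of $f_\eps$ then gives $f\,\mu = \frac{c_0}{2}\widehat f\,\nabla u$.

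The principal obstacle is the drift term: it is a \emph{quadratic} expression in $\nabla u_\eps$, and Radon-measure convergence of $\mu_\eps$ alone is insufficient. The decisive input is the varifold convergence in Proposition \ref{prop:lsc-H}, which produces the measure-function pair convergence of the tangential projections $(\tilde\mu_\eps,P_\eps)\to(\mu,P)$; once this is available the drift identification becomes a routine manipulation in the framework of \cite{Hutc86}, while the potential term and the Gronwall bound are comparatively elementary.
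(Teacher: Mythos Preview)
Your argument follows essentially the same route as the paper: Gronwall on the energy to verify \eqref{eq:ass-g}, the same splitting of $-\nabla u_\eps\, g_\eps$ into drift and potential parts, the strong measure-function pair convergence $(\tilde\mu_\eps,P_\eps)\to(\mu,P)$ for the drift term, and the identification via $\nabla G(u_\eps)$ in the uniform-convergence case. The energy bound you obtain differs only by constants from the paper's.

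There is one technical slip in your treatment of the potential term. Applying Hutchinson's compactness to the pair $(\LL^{n+1},f_\eps\sqrt{2W(u_\eps)}\nabla u_\eps)$ does not work: the underlying measures are all $\LL^{n+1}$, so the limit pair would live over $\LL^{n+1}$ rather than $\mu$, and in any case the required uniform $L^2(\LL^{n+1})$ bound
\[
\int_{\Omega_T} f_\eps^2\, 2W(u_\eps)\,|\nabla u_\eps|^2\,dx\,dt
\]
blows up like $1/\eps$ along the optimal profile. The paper instead writes $f_\eps\sqrt{2W(u_\eps)}\nabla u_\eps\,\LL^{n+1}=\vec f_\eps\,\tilde\mu_\eps$ with $\vec f_\eps:=f_\eps\,\frac{\sqrt{2W(u_\eps)}}{\eps|\nabla u_\eps|}\nu_\eps$ and checks
\[
\int_{\Omega_T}|\vec f_\eps|^2\,d\tilde\mu_\eps \,=\, \int_{\Omega_T}\frac{2}{\eps}f_\eps^2 W(u_\eps)\,dx\,dt \,\leq\, 2\int_0^T\sup_x f_\eps^2\,\mu_\eps^t(\Omega)\,dt\,<\,\infty,
\]
so that Hutchinson's theorem, applied to $(\tilde\mu_\eps,\vec f_\eps)$, yields $f\in L^2(\mu)$ satisfying \eqref{eq:lim-f}. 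With this correction your proof matches the paper's.
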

\begin{proof}
We have to verify that $g_\eps$ satisfies \eqref{eq:ass-g}.
We first show a uniform bound on the diffuse surface
surface area. With this aim we set as above $w_\eps\,=\, -\eps\Delta u_\eps
+\frac{1}{\eps}W'(u_\eps)$ 
and compute
\begin{align*}
  &\frac{d}{dt}\int_\Omega \Big(\frac{\eps}{2}|\nabla u_\eps|^2(t,x) 
  +\frac{1}{\eps}W(u_\eps(t,x))\Big)\,dx
   \\ 
  =\,&\int_\Omega w_\eps(t,x) \partial_t u_\eps(t,x)\,dx 
  \\ 
  =\,& \int_\Omega -\frac{1}{\eps}w_\eps^2(t,x) + w_\eps(t,x)
  \Big(b_\eps(t,x)\cdot\nabla 
  u_\eps(t,x) + \frac{1}{\eps}
  f_\eps(t,x)\sqrt{W(u_\eps(t,x))}\Big)\,dx 
  \\ 
  \leq\,& \int_\Omega \Big(\frac{\eps}{2} |b_\eps(t,x)|^2 |\nabla u_\eps(t,x)|^2 +
  \frac{1}{\eps}f_\eps(t,x)^2 W(u_\eps(t,x))\Big)\,dx
  \\
  \leq\,& \sup_{x\in\Omega} \Big(|f_\eps(t,x)|^2 +
  |b_\eps(t,x)|^2\Big) \int_\Omega \Big(\frac{\eps}{2}|\nabla u_\eps|^2(t,x) 
  +\frac{1}{\eps}W(u_\eps(t,x))\Big)\,dx.
\end{align*}
Hence Gronwall's inequality and \eqref{eq:ass-app1} imply that
\begin{gather}
  E_\eps(u_\eps(t,\cdot)) \,\leq\,
  E_\eps(u_\eps(0,\cdot)) e^{\Lambda_1}. \label{eq:app-E}
\end{gather}
Under the assumption \eqref{eq:ass-ini} on the initial data 
we deduce that
\begin{align}
  \int_{\Omega_T} \frac{1}{\eps} g_\eps(t,x)^2\,dx\,dt \,&\leq\, 2
  \int_{\Omega_T} \Big(\eps|b_\eps(t,x)|^2 |\nabla u_\eps(t,x)|^2 +
  \frac{1}{\eps}f_\eps(t,x)^2 W(u_\eps(t,x))\Big)\,dx\,dt
   \notag
   \\
  \leq\, &4\int_0^T \sup_{x\in \Omega} \Big(|f_\eps(t,x)|^2 +
  |b_\eps(t,x)|^2\Big) E_\eps(u_\eps(t,\cdot))\,dt
  \notag
  \\
  \leq\, &4 \Lambda_0 e^{\Lambda_1}\Lambda_1,
 \label{eq:intermer}
\end{align}
which verifies \eqref{eq:ass-g}. Therefore we obtain \eqref{eq:fmc} with $g$ satisfying 
\eqref{eq:conv-g}. To prove the representation formula \eqref{eq:lim-drift}, with $b,f$ as in
\eqref{eq:drift}, \eqref{eq:lim-f}, we compute
\begin{align}
  \int_{\Omega_T} -\eta\cdot\nabla u_\eps g_\eps\,dx dt \,=\,&
  \int_{\Omega_T} -\eta \cdot \big(P_\eps-Id\big) 
  b_\eps\,d\tilde{\mu}_\eps \notag\\ &-
  \int_{\Omega_T} f_\eps \sqrt{2W(u_\eps)}\nabla u_\eps\cdot\eta\,dx dt \label{eq:fb}
\end{align}
To characterize the limit of the right-hand side of this equation we
first observe that by  \eqref{ass:bound-mu}, \eqref{eq:ass-app1} 
we have
\begin{gather*}
  \sup_{\eps>0}\int_{\Omega_T} |b_\eps|^2 +
  |\vec{f}_\eps|^2\,d\tilde{\mu}_\eps \,\leq\, 2\sup_{\eps>0}
  \int_0^T  \sup_{x\in\Omega}\Big(|b_\eps(t,x)|^2 + f_\eps(t,x)^2\Big)
  E_\eps(u_\eps(t,\cdot))\,dt \,<\, \infty,
\end{gather*}
where we defined
\begin{gather}
  \vec{f}_\eps\,:=\, f_\eps \frac{\sqrt{2 W(u_\eps)}}{\eps|\nabla
      u_\eps|}\nu_\eps \label{eq:def-f-vec}.
\end{gather}
By \eqref{eq:conv-xi} and
\cite[Theorem 4.4.2]{Hutc86} there exist $b,f\,\in\, L^2(\mu)$ and
a subsequence $\eps\to 0$ such that \eqref{eq:drift} and
\begin{gather*}
  \int_{\Omega_T} f\cdot\eta\,d\mu\,=\, \lim_{\eps\to 0} \int_{\Omega_T}
  \vec{f}_\eps\cdot\eta \,d\tilde{\mu}_\eps \,=\, \lim_{\eps\to 0}
  \int_{\Omega_T} \eta\cdot f_\eps \sqrt{2W(u_\eps)}\nabla u_\eps\, dx
  dt,
\end{gather*}
which is \eqref{eq:lim-f}, are satisfied.  
By varifold convergence and an argument similar to that one used in
\cite[Proposition 3.2]{Mose01}, \cite[Lemma 6.3]{MuR08} one obtains that 
$(P_\eps,\tilde{\mu}_\eps)$ converges to $(P,\mu)$ strongly as
measure-function pairs. Together with \eqref{eq:drift} this implies that
\begin{gather*}
  \lim_{\eps\to 0} \int_{\Omega_T} -\eta \cdot \big(P_\eps-Id\big) 
  b_\eps\,d\tilde{\mu}_\eps \,=\, \int_{\Omega_T} -\eta \cdot \big(P-Id\big) 
  b\,d\mu. 
\end{gather*}
By \eqref{eq:lim-f}, \eqref{eq:fb}, this proves the
characterization of $g$.

In the case that $b_\eps\,\to\, \hat{b}$ uniformly in $\Omega_T$ we
obtain from \eqref{eq:drift}  that $b=\hat{b}$ on $\spt(\mu)$. To
characterize $f$ in \eqref{eq:lim-f} we observe that for $G$ as in
\eqref{eq:def-H} 
\begin{gather*}
  \int_{\Omega_T} \eta\cdot f_\eps \sqrt{2W(u_\eps)}\nabla u_\eps\, dx
  dt \,=\, \int_{\Omega_T} \eta\cdot f_\eps \nabla G(u_\eps)\,dx dt.
\end{gather*}
By \cite{Modi87} we have that $\nabla G(u_\eps)\,\to\,
\frac{c_0}{2}\nabla u_\eps$ weakly as measures and since $f_\eps\to f$
uniformly we conclude that
\begin{gather*}
  \int_{\Omega_T} f\cdot\eta\,d\mu\,=\, \frac{c_0}{2}\int_{\Omega_T}
  f\eta\cdot \nabla u.
\end{gather*}
\end{proof}
\begin{remark}
By the same arguments we also can allow for pertubations of the form
$g_\eps=\eps f_\eps |\nabla u_\eps|$ as were considered by Benes and
Mikula \cite{BeMi98} in a
model for phase transitions and by Benes, Chalupeck\'{y}, and Mikula in
image processing \cite{BeCM04}.
\end{remark}

\subsection{Application to Mullins-Sekerka problem with kinetic undercooling}  
Here we apply our main Theorem in a situation
where the forcing term in the limit is not concentrated on the phase
interface but rather given by the trace of a Sobolev
function in the ambient space. As a concrete application we prove the
convergence of phase field approximations of the Mullins--Sekerka
problem with kinetic undercooling. This improves in space dimensions
$n=2,3$ an earlier result by Soner \cite{Sone95}.
Throughout this section we will assume $\Omega=\R^n$. 
As noticed in Remark \ref{rem:unbounded} our main results apply also to
this case. Let us consider the Allen--Cahn equation with perturbations 
$g_\eps$ that are given by
\begin{gather}
  g_\eps(t,x)\,=\,\theta_\eps(t,x)\sqrt{2W(u_\eps(t,x))},
  \label{eq:surf-theta}
\end{gather}
where we now assume that $\theta_\eps(t,\cdot)\in C^1(\Omega)$ for all $t\in \R$ and that
\begin{gather}
  \sup_{\eps>0}\int_{\Omega_T}\big(\theta_\eps^2+\vert\nabla\theta_\eps\vert^2\big)
  \,dxdt\,<\,\infty. 
  \label{eq:bulk-Sobolev}
\end{gather}
We first show that we can derive from this control of $\theta_\eps$ in
the bulk that the assumption \eqref{eq:ass-g}, which was necessary to apply Theorem
\ref{the:main}, is satisfied by $g_\eps$.
\begin{proposition}\label{prop:g-bulk} 
Let sequences $(u_\eps)_{\eps>0}$, $(\theta_\eps)_{\eps>0}$ be given and define $g_\eps$ by
\eqref{eq:surf-theta}. Assume that \eqref{eq:bulk-Sobolev} is satisfied and
that $u_\eps,g_\eps$ are solutions of \eqref{eq:p-AC}-\eqref{eq:bdry}. Furthermore let
\eqref{eq:ass-ini} hold for the initial data $u_\eps^0$ and assume that we
have a uniform upper bound on the density of the diffuse surface area measures,
\begin{gather}
  \sup_{x\in\R^n,\,R>0}\frac{\mu_\eps^t(B_R(x))}{R^{n-1}}\leq K(T), \quad \forall\, t\in\,[0,T).
  \label{eq:upper-bound-mu-eps-t}
\end{gather}
Then $g_\eps$ satisfies \eqref{eq:ass-g}.
\end{proposition}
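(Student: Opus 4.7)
The plan is to reduce the bound for $\eps^{-1}g_\eps^2$ to a trace-type estimate for $\theta_\eps$ against the diffuse surface measures $\mu_\eps^t$. First I would exploit the special form of the perturbation: since $g_\eps^2 = 2\theta_\eps^2 W(u_\eps)$ and $W(u_\eps)/\eps$ is dominated by the Radon--Nikodym density of $\mu_\eps^t$ with respect to $\LL^n$, it follows that
$$\int_\Omega \frac{g_\eps^2(t,\cdot)}{\eps}\,dx \,\leq\, 2\int_\Omega \theta_\eps^2(t,\cdot)\,d\mu_\eps^t.$$
By Fubini, \eqref{eq:bulk-Sobolev} guarantees $\theta_\eps(t,\cdot)\in H^1(\R^n)$ for almost every $t\in (0,T)$, so the task reduces to controlling $\int_\Omega \theta_\eps^2(t,\cdot)\,d\mu_\eps^t$ uniformly by $\|\theta_\eps(t,\cdot)\|_{H^1(\R^n)}^2$.

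The core of the argument is a trace inequality of Maz'ya--Adams type: for any Radon measure $\mu$ on $\R^n$ ($n=2,3$) satisfying the upper Ahlfors-type bound $\mu(B_R(x))\leq K R^{n-1}$ for all $x\in\R^n$ and $R>0$, there exists $C=C(K)$ such that
$$\int_{\R^n} \phi^2\,d\mu \,\leq\, C\|\phi\|_{H^1(\R^n)}^2 \qquad \text{for every } \phi\in H^1(\R^n).$$
In dimension $n=3$ this is a direct instance of Adams' trace theorem (codimension one, $p=q=2$ lies in the admissible range). In dimension $n=2$ the critical Sobolev embedding $H^1(\R^2)\hookrightarrow L^p$ for every $p<\infty$, combined with a Whitney-type dyadic decomposition of $\mu$ and the one-dimensional density bound, yields the same estimate. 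The hypothesis \eqref{eq:upper-bound-mu-eps-t} provides exactly the needed density control with a constant $K(T)$ independent of $\eps$ and $t\in[0,T)$.

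Applying this inequality to $\phi=\theta_\eps(t,\cdot)$ for almost every $t$, integrating in time, and invoking \eqref{eq:bulk-Sobolev} then yields
$$\int_0^T\int_\Omega \frac{g_\eps^2}{\eps}\,dx\,dt \,\leq\, 2C(K(T))\int_0^T \|\theta_\eps(t,\cdot)\|_{H^1(\R^n)}^2\,dt,$$
which is bounded uniformly in $\eps$ by \eqref{eq:bulk-Sobolev} and thereby establishes \eqref{eq:ass-g}. The hard part of the proof is the trace inequality in the second paragraph; once in hand the rest is mechanical. The restriction to $n=2,3$ enters precisely here, via the codimension-one structure of $\mu_\eps^t$ in low ambient dimension, mirroring the role played by dimension elsewhere in the paper.
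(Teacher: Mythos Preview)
Your reduction is exactly the paper's: bound $\eps^{-1}g_\eps^2$ by $\theta_\eps^2$ integrated against $\mu_\eps^t$, then control the latter via a trace inequality based on the density hypothesis \eqref{eq:upper-bound-mu-eps-t}. The only difference is in how the trace inequality is obtained. The paper quotes the Meyers--Ziemer estimate \cite{Ziem82}: the bound $\mu(B_R(x))\leq K R^{n-1}$ implies $\big|\int_{\R^n}\varphi\,d\mu\big|\leq C_n K\int_{\R^n}|\nabla\varphi|\,dx$ for $\varphi\in C^1_c(\R^n)$, and applies it to $\varphi=\varphi_R\,\theta_\eps^2(t,\cdot)$ with a cutoff $\varphi_R$; expanding $|\nabla(\varphi_R\theta_\eps^2)|$ and letting $R\to\infty$ gives $\int \theta_\eps^2\,d\mu_\eps^t \leq C_n K(T)\int_{\R^n} (2\theta_\eps^2+|\nabla\theta_\eps|^2)\,dx$ directly. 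This route is dimension-independent and more elementary than separate Adams/Maz'ya arguments for $n=3$ and $n=2$; indeed your $H^1\to L^2(\mu)$ trace estimate is an immediate corollary of it (take $\varphi=\phi^2$ and use Young's inequality), so no Whitney decomposition is needed.

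One correction: the restriction to $n=2,3$ does \emph{not} enter here. Proposition~\ref{prop:g-bulk} is valid in every dimension; the low-dimensional hypothesis in the main theorem comes solely from the R\"oger--Sch\"atzle result \cite{RSc06} invoked in Proposition~\ref{prop:lsc-H}.
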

\begin{proof}
By \cite[Theorem 5.12.4]{Ziem82} it follows from
\eqref{eq:upper-bound-mu-eps-t} that for any $\varphi\in C^1_c(\Rn)$
\begin{gather}
  \left\vert\int_{\R^n}\varphi(x)\,d\mu(x)\right\vert \,\leq\, K_n\,
  M(\mu)\int_{\R^n}\vert\nabla\varphi\vert\,dx. \label{eq:est-Ziemer}
\end{gather}
For $R>1$ we choose a smooth cut-off function $\varphi_R\in C^1_c(\R^n)$
with $\varphi_R\geq 0$ on $\R^n$, $\varphi_R\equiv 1$ on $B_R$,
$\varphi_R\equiv 0$ on $B_{2R}$, and
$\|\nabla\varphi_R\|_{L^\infty(\R^n)}\leq 1$. We then obtain that
\begin{gather}
  \frac{1}{\eps}\int_{B_R}g_\eps^2(t,x)\,dx \,=\,
  \int_{B_R}\theta_\eps^2(t,x)\frac{W(u_\eps(t,x))}{\eps}\,dx 
  \leq
  \int_{\R^n}\varphi_R(x)\theta_\eps^2(t,x)\,d\mu_\eps^t(x).
  \label{eq:proof-Z-1} 
\end{gather}
Applying \eqref{eq:est-Ziemer} with
$\varphi=\varphi_R\theta_\eps^2(t,\cdot)$ we deduce that the
right-hand-side of \eqref{eq:proof-Z-1} is estimated by
\begin{align*}
  \int_{\R^n}\varphi_R(x)\theta_\eps^2(t,x)\,d\mu_\eps^t(x)
  \,&\leq\, K_n\,
  K(T)\int_{\R^n}\vert\nabla\big(\varphi_R\theta_\eps^2(t,\cdot)\big)\vert\,dx 
  \\
  &\leq\,  K_n\,
  K(T)\int_{\R^n}\theta_\eps^2(t,\cdot)\vert\nabla\varphi_R\vert +
  2\varphi_R\vert\theta_\eps(t,\cdot)\nabla\theta_\eps(t,\cdot)\vert\,dx 
  \\
  &\leq\,  K_n\,
  K(T)\int_{\R^n}\big(2\theta_\eps^2(t,\cdot)+\vert\nabla\theta_\eps(t,\cdot)\vert^2\big)\,dx.   
\end{align*}
With $R\to\infty$ we deduce from \eqref{eq:proof-Z-1}, the last
inequality, and \eqref{eq:bulk-Sobolev} that \eqref{eq:ass-g} holds. 
\end{proof}
Next we apply Proposition \ref{prop:g-bulk} to the phase-fields
approximation of the Mullins-Sekerka problem with kinetic undercooling
introduced in \cite{Sone95}. More precisely, let
$(u_\eps,\theta_\eps)_{\eps>0}$ be the unique, bounded, smooth solutions
on $Q:=(0,+\infty)\times\R^n$ to the following Cauchy problem
\begin{align}
  \eps\partial_t u_\eps \,&=\, \eps\Delta
  u_\eps-\frac{W^\prime(u_\eps)}{\eps}+\sqrt{2 W(u_\eps)}\theta_\eps
  &&\text{ in }Q,
  \label{eq:front}
  \\
  \partial_t \theta_\eps \,&=\, \Delta\theta_\eps- \sqrt{2 W(u_\eps)}\partial_t u_\eps
  &&\text{ in }Q, \label{eq:bulk}\\
  u_\eps(0,\cdot) \,&=\, u^0_\eps, \quad
  \theta_\eps(0,\cdot)\,=\,\theta^0_\eps, &&\text{ in }\R^n. 
\label{eq:initial-cond}
\end{align}
\begin{proposition}\label{prop:mullins-sek}
Let $n=2,3$ and let $(u_\eps,\theta_\eps)_{\eps>0}$ satisfy
\eqref{eq:front}-\eqref{eq:initial-cond}. Assume that the initial data
$u^0_\eps,\theta^0_\eps$ 
are well-prepared in the sense of \cite[Section 2.4]{Sone95} and that
\begin{gather}
  \sup_{\eps>0}\| u^0_\eps\|_{L^\infty(\R^n)}\leq 1,
  \label{eq:ass-initial-cond-I}
  \\
  \sup_{\eps>0} \int_{\R^n}\left(\frac{\eps}{2}\vert \nabla u^0_\eps\vert^2
    +\frac{W(u^0_\eps)}{\eps}+(\theta^0_\eps)^2\right)\,dx<C_1
  \label{eq:ass-initial-cond-II}
\end{gather}
hold. Then there exists a subsequence $\eps\to 0$ (not relabelled) and
functions
\begin{align*}
  \theta \,&\in\, L^\infty_{loc}(0,\infty;L^2(\Rn))\cap
  L^2_{loc}(0,\infty;H^{1,2}(\Rn)),\\ 
  u \,&\in\, BV_{loc}(Q)\cap L^\infty(0,\infty;BV_{loc}(\R^n,\{-1,1\}),
\end{align*}
such that
\begin{align}
   \theta_\eps\,&\to\, \theta &&\text{ weakly in
   }L^2_{loc}(0,\infty;H^{1,2}(\Rn)), \label{eq:conv-theta-1}\\
   \theta_\eps(t,\cdot)\,&\to\, \theta(t,\cdot) &&\text{in }L_{loc}^2(\R^n),
   \text { for every }t\geq 0, \label{eq:conv-theta-2}
\end{align}
and
\begin{gather}
  u_\eps\,\to\, u \qquad\text{ in }L^p_\loc(Q) \text{ for all }1\leq
  p<\infty. \label{eq:conv-u-MS} 
\end{gather}
Moreover the conclusion of Theorem \ref{the:main} hold. In particular,
there exists a measurable function 
$\alpha:\partial^*\{u=1\}\to\N$ such that, in the generalized formulation
of Theorem \ref{the:main},
\begin{gather}
  v\,=\, H - \frac{1}{\alpha}\theta\nu \qquad \Ha^n-\text{almost
  everywhere on }\partial^*\{u=1\},
  \label{eq:MS-fmc}
\end{gather}
where $\nu$ denotes the inner normal of $\{u=1\}$ on $\partial^*\{u=1\}$.
%

Finally we have for every $\eta\in C^\infty_c(Q)$ that
\begin{equation}
  -\int_Q(\partial_t\eta+\Delta\eta)\theta\,dxdt\,=\, \frac{c_0}{2}\int_Q
   u\partial_t\eta \,dxdt.  \label{eq:MS-bulk} 
\end{equation}
\end{proposition}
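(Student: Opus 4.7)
The plan is to verify the action bound \eqref{eq:ass-g} for $g_\eps = \theta_\eps\sqrt{2W(u_\eps)}$ through a dissipation identity, invoke Theorem \ref{the:main}, and then separately pass to the limit in the heat equation \eqref{eq:bulk} and in the forcing identity. Testing \eqref{eq:front} against $\partial_t u_\eps$ and \eqref{eq:bulk} against $\theta_\eps$, integrating in space, and summing yields
\begin{equation*}
\frac{d}{dt}\Bigl(E_\eps(u_\eps(t,\cdot)) + \tfrac{1}{2}\|\theta_\eps(t,\cdot)\|_{L^2(\R^n)}^2\Bigr)
= -\int_{\R^n}\bigl(\eps(\partial_t u_\eps)^2 + |\nabla\theta_\eps|^2\bigr)\,dx .
\end{equation*}
Together with \eqref{eq:ass-initial-cond-II} this gives uniform bounds on $E_\eps(u_\eps(t,\cdot))$, on $\|\theta_\eps\|_{L^\infty_t L^2_x}$ and on $\|\nabla\theta_\eps\|_{L^2_{t,x}}$, so in particular \eqref{eq:bulk-Sobolev} holds on every finite time interval. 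Combined with the upper density bound \eqref{eq:upper-bound-mu-eps-t}, which is furnished by the comparison arguments of \cite{Sone95}, Proposition \ref{prop:g-bulk} then yields \eqref{eq:ass-g}.

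Theorem \ref{the:main} now gives, along a subsequence, the phase indicator $u$, the convergences \eqref{eq:conv-mu}--\eqref{eq:spt-mu}, the $L^2$-flow $(c_0^{-1}\mu^t)$ with generalized velocity $v$ and weak mean curvature $H$, and a forcing $g\in L^2(\mu,\R^n)$ with $v = H + g$. The bound $|u_\eps|\le 1$ (maximum principle, since $W'(\pm 1)=W(\pm 1)=0$) combined with the $L^1$ convergence from Proposition \ref{prop:MM} yields \eqref{eq:conv-u-MS}. For $\theta_\eps$ the bounds above give weak convergence in $L^2_{loc}(0,\infty;H^{1,2}(\R^n))$ and weak-$*$ convergence in $L^\infty_{loc}(0,\infty;L^2(\R^n))$, which is \eqref{eq:conv-theta-1}. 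Rewriting \eqref{eq:bulk} as $\partial_t(\theta_\eps + G(u_\eps)) = \Delta\theta_\eps$ with $G$ from \eqref{eq:def-H}, the uniform $L^\infty$-bound on $G(u_\eps)$ and the $L^1_{loc}$ convergence $G(u_\eps)\to\frac{c_0}{2}u$ show that $\partial_t\phi_\eps$ with $\phi_\eps:=\theta_\eps+G(u_\eps)$ is controlled in $L^2_t H^{-1}_{x,loc}$. A standard Aubin--Lions argument together with the well-preparedness of $\theta_\eps^0$ then yields the pointwise-in-$t$ strong $L^2_{loc}$ convergence \eqref{eq:conv-theta-2}.

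To identify $g$, I exploit the algebraic identity $\sqrt{2W(u_\eps)}\nabla u_\eps = \nabla G(u_\eps)$. For $\eta\in C^1_c(Q,\R^n)$, integration by parts gives
\begin{equation*}
-\int_Q \eta\cdot\nabla u_\eps\,g_\eps\,dxdt
= -\int_Q \eta\cdot\theta_\eps\,\nabla G(u_\eps)\,dxdt
= \int_Q G(u_\eps)\bigl(\theta_\eps\dive\eta + \eta\cdot\nabla\theta_\eps\bigr)\,dxdt .
\end{equation*}
Strong $L^p_{loc}$ convergence $G(u_\eps)\to\frac{c_0}{2}u$ (from Proposition \ref{prop:MM}, the $L^\infty$-bound and Lipschitz property of $G$ on $[-1,1]$) paired with the weak $L^2_{loc}$ convergence of $\theta_\eps$ and $\nabla\theta_\eps$ allows passage to the limit; integrating back by parts in the limit yields
\begin{equation*}
\int_Q\eta\cdot g\,d\mu = -\tfrac{c_0}{2}\int_Q \eta\,\theta\cdot d\nabla_x u .
\end{equation*}
Since the right-hand side is supported on $\partial^*\{u=1\}$, $g$ vanishes $\mu$-almost everywhere on the hidden part of $\mu$. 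The integrality of $c_0^{-1}\mu^t$ from Proposition \ref{prop:lsc-H} provides the integer-valued multiplicity $\alpha$ with $\mu^t = c_0\alpha\,\Ha^{n-1}$ on $\partial^*\{u(t,\cdot)=1\}$ for a.e.\ $t$, while $\nabla_x u(t,\cdot) = 2\nu\,\Ha^{n-1}\eing\partial^*\{u(t,\cdot)=1\}$ with $\nu$ the inner normal. Comparison of the two measures on the phase boundary gives $g = -\theta\nu/\alpha$ there, and substitution into $v = H + g$ yields \eqref{eq:MS-fmc}.

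Finally, testing \eqref{eq:bulk} against $\eta\in C^\infty_c(Q)$ and integrating by parts in both variables, together with $\sqrt{2W(u_\eps)}\partial_t u_\eps = \partial_t G(u_\eps)$, produces
\begin{equation*}
-\int_Q \theta_\eps(\partial_t\eta + \Delta\eta)\,dxdt = \int_Q G(u_\eps)\partial_t\eta\,dxdt ,
\end{equation*}
after which the weak convergence of $\theta_\eps$ and the $L^1_{loc}$ convergence of $G(u_\eps)\to\frac{c_0}{2}u$ yield \eqref{eq:MS-bulk}. The principal technical obstacle is the compactness step for $\theta_\eps$: the source $\sqrt{2W(u_\eps)}\partial_t u_\eps$ is not uniformly controlled in $L^2_{t,x}$, which forces one to work with the shifted unknown $\phi_\eps = \theta_\eps + G(u_\eps)$ rather than $\theta_\eps$ directly in order to apply Aubin--Lions. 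In contrast, once the action bound is in place, the identification of the limit forcing benefits greatly from the structure $g_\eps\nabla u_\eps = \theta_\eps\nabla G(u_\eps)$, which permits passage to the limit through a weak-times-strong pairing and avoids any need for strong convergence of $\nabla u_\eps$.
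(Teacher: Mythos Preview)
Your proof follows essentially the same route as the paper's: derive the energy identity \eqref{eq:MS-energy}, invoke the density bound from \cite{Sone95} to apply Proposition~\ref{prop:g-bulk}, then Theorem~\ref{the:main}; identify $g$ via the substitution $g_\eps\nabla u_\eps=\theta_\eps\nabla G(u_\eps)$ and integration by parts; read off $\alpha$ from the integrality of $c_0^{-1}\mu^t$; and obtain \eqref{eq:MS-bulk} by testing \eqref{eq:bulk} and using $\partial_t G(u_\eps)=\sqrt{2W(u_\eps)}\partial_t u_\eps$. These parts are correct and match the paper almost line by line.

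The one substantive difference is the treatment of \eqref{eq:conv-theta-2}. The paper does not prove it but simply cites \cite[Section 2.3, Proposition 3.4]{Sone95}. You attempt a direct Aubin--Lions argument on $\phi_\eps=\theta_\eps+G(u_\eps)$, and here there is a gap: while $\partial_t\phi_\eps=\Delta\theta_\eps$ is indeed bounded in $L^2_tH^{-1}_{x,\loc}$, the spatial regularity of $\phi_\eps$ is insufficient for the triple $H^1\subset\!\subset L^2\subset H^{-1}$, because $\nabla G(u_\eps)=\sqrt{2W(u_\eps)}\nabla u_\eps$ is only bounded in $L^\infty_tL^1_x$ (via the energy), not in $L^2_{t,x}$. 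What you actually get from Aubin--Lions (or Simon's lemma) is compactness of $\phi_\eps$ in spaces like $C^0_tH^{-1}_{x,\loc}$, not strong $L^2_{\loc}$ for every $t$. This does not affect the identification of $g$ or \eqref{eq:MS-bulk}, since those only use the weak convergence \eqref{eq:conv-theta-1} paired with the strong convergence of $G(u_\eps)$; but \eqref{eq:conv-theta-2} as stated requires the finer parabolic estimates carried out in \cite{Sone95}, and you should cite that rather than appeal to a ``standard'' compactness lemma.
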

\begin{remark} An example of well-prepared intial data is given by
$\theta^0_\eps=\theta^0\in C^2_c(\R^n)$, and $u^0_\eps(x):= 
\gamma_\eps(d_{\partial E}(x)/\eps)$, where $d_{\partial E}$ denotes the
signed distance from the smooth boundary of the open subset $E$ of
$\R^n$, while $\gamma_\eps$ are appropriate approximations of the
optimal transition profile $\tanh (\cdot/\sqrt{2})$. 
\end{remark}
\begin{proof}[Proof of Proposition \ref{prop:mullins-sek}]
We show that Proposition \ref{prop:g-bulk} can be
applied. The key estimates have been proved in
\cite{Sone95}. Firstly by \eqref{eq:ass-initial-cond-I} and the maximum 
principle $\vert u_\eps(t,x)\vert\leq 1$ for every $(t,x)\in Q$. Hence
\eqref{eq:surf-theta} is satifsfied.
Next, solutions of  \eqref{eq:front}-\eqref{eq:initial-cond} satisfy the 
energy identity 
%
\begin{gather}
  \mu_\eps^t(\R^d) + \|\theta_\eps(t,\cdot)\|_{L^2(\R^n)}^2 +
  \int_0^t\int_{\R^n}\big(\eps(\partial_t u_\eps)^2+\vert\nabla\theta_\eps\vert^2\big)\,dxdt
  \,=\, \mu_\eps^0(\R^d)+\|\theta_\eps^0\|_{L^2(\R^n)} \label{eq:MS-energy}
\end{gather}
and \eqref{eq:conv-theta-1}-\eqref{eq:conv-u-MS} follow from
\eqref{eq:MS-energy} and \cite[Section 2.3, Proposition
3.4]{Sone95}. Finally the crucial estimate 
\eqref{eq:upper-bound-mu-eps-t} was shown in \cite[Proposition 7.2]{Sone95}. 
By Remark \ref{rem:unbounded} and Proposition \ref{prop:g-bulk} we now
can apply Theorem 
\ref{the:main}. To derive the limiting forcing term $g$ we consider the
function $G$ defined in \eqref{eq:def-H}
and observe that that by \eqref{eq:conv-g} for all $\eta\in C^1_c(Q)$
\begin{align}
  \int_Q\eta\,\cdot\,g\,d\mu \,=\,&\lim_{\eps\to 0}
  \int_Q -\eta\,\cdot\,\theta_\eps \sqrt{2W(u_\eps)}\nabla u_\eps\,dxdt
  \notag\\ 
  =\,& \lim_{\eps\to 0}\int_Q -\eta\cdot\theta_\eps\nabla G(u_\eps)\,dxdt
  \notag\\
  =\, &\lim_{\eps\to 0} \int_Q G(u_\eps)\nabla\cdot (\eta \theta_\eps)\,dxdt
  \,=\,  \frac{c_0}{2} \int_Q u \nabla\cdot (\eta \theta)\,dxdt, \label{eq:MS-fmc-weak}
\end{align}
where we have used \eqref{eq:conv-theta-1}, \eqref{eq:conv-u-MS}. 
Now let $\alpha(t,x):= \theta^{n-1}(c_0^{-1}\mu^t,x)$ denote the
$(n-1)$-dimensional density of $c_0^{-1}\mu^t$ in $x$. By the integrality of
$c_0^{-1}\mu^t$ we obtain that $\alpha$ is integer valued $\Ha^n$-almost
everywhere. From \eqref{eq:MS-fmc-weak} we deduce that
\begin{gather*}
  gc_0 \alpha \,=\, -c_0 \theta \nu
\end{gather*}
$\Ha^{n}$-almost everywhere on $\partial^*\{u=1\}$ and \eqref{eq:MS-fmc}
follows from \eqref{eq:lim-AC}.

To derive \eqref{eq:MS-bulk} we first multiply \eqref{eq:bulk} with
$\eta\in C^1_c(Q)$, integrate over $Q$, do some partial integrations, and
use that $\sqrt{2W(u_\eps)}\partial_t u_\eps = \partial_t
G(u_\eps)$. This gives
\begin{gather*}
  \int_Q \partial_t\eta \theta_\eps\,dx dt \,=\, \int_Q
  \nabla\eta\cdot\nabla \theta_\eps - \partial_t \eta  G(u_\eps)\,dxdt
\end{gather*}
and by \eqref{eq:conv-theta-1}, \eqref{eq:conv-u-MS} we conclude that
\eqref{eq:MS-bulk} holds.
\end{proof}
\begin{remark}
Proposition \ref{prop:mullins-sek} improves the results obtained in
\cite{Sone95} for space dimensions $n\leq 3$. Firstly we have
shown that $c_0^{-1}\mu^t$ are for $\LL^1$-a.e. $t\in (0,+\infty)$
\textit{integer  rectifiable}, which implies  by \cite[Section
5.8]{Brak78} that the generalized mean curvature vector $H(t,\cdot)$ is
$\mu^t$-a.e. orthogonal to $T_x\mu^t$.
Secondly we are closer to a pointwise formulation of the interface
motion law on the phase boundary. 
The occurrence of an integer factor $\alpha$ in \eqref{eq:MS-fmc} is
typical in the varifold approach to the convergence in phase field
equations, see for example \cite{Chen96} and \cite{Tone05}, or
\cite{RTo08} for a situation where this problem could be resolved.
\end{remark}

\subsection{Application to a model for diffusion induced grain boundary
  motion}
As another application we discuss a model for diffusion
induced grain boundary motion proposed by Cahn, Fife, and Penrose
\cite{CaFP97} that was analyzed in a couple of different papers
\cite{FiCE01,DeEl01,DeES01,GaNS07}. The model describes the dynamics of
two phases of different orientations in a polycrystalline film and of the
concentration of certain atoms that diffuse along the grain
boundaries from outside into the film. The system is driven by the
reduction of surface area of the grain boundary and a driving force
that depends on the concentration of the metal.
In the model a free surface area of the form
\begin{gather}
  \F_\eps(u_\eps,c_\eps)\,=\, \int_\Omega \frac{\eps}{2}|\nabla
  u_\eps|^2 +\frac{1}{\eps}W(u_\eps) + \frac{1}{2\eps} c_\eps^2 + (u_\eps+1)
  f(c_\eps)\,dx \label{eq:F-app2}
\end{gather}
is considered, where $u_\eps$ is a phase field that describes two
different crystal-lattice orientations indicated by the values $u_\eps=\pm 1$ 
and where $c_\eps$ denotes the concentration field of
atoms. Here $W$ is a double well potential (typically a double obstacle
potential) and $f$ is a given globally Lipschitz continuous function with $f(0)=0$ and
$f(r)\geq 0$ for $r\in [0,1]$, which is the range of physically
meaningful values for $c_\eps$. 
A gradient flow dynamic is assumed, with respect to  a scalar product
that is of $L^2$-type in the first component and
of $H^{-1}$-type with degenerate
mobility in the second component,
\begin{gather*}
  \|(v_1,v_2)\|^2_{(u_\eps,c_\eps)}\,:=\, \|v_1\|_{L^2(\Omega)}^2 + \int_\Omega
  D(u_\eps)|\nabla Z(v_2)|^2
\end{gather*}
for a tangent vector $(v_1,v_2)$ at $(u_\eps,c_\eps)$ and for a suitable
degenerate mobility function $D$. Here $z=Z(v_2)$ denotes the solution of
\begin{gather*}
  -\nabla \big(D(u_\eps)\nabla z\big)\,=\, v_2.
\end{gather*}
These choices lead to a system of equations 
\begin{align}
  \eps\partial_t u_\eps\,&=\, \eps\Delta u_\eps
  -\frac{1}{\eps}W'(u_\eps) + f(c_\eps), \label{eq:1app2}\\
  \eps \partial_t c_\eps \,&=\,
  \nabla\cdot\Big(D(u_\eps)\nabla\big(c_\eps +\eps\, (u_\eps+1)
  f'(c_\eps)\big)\Big), \label{eq:2app2}
\end{align}
complemented by
initial and boundary conditions, 
\begin{align}
  u_\eps(0,\cdot)\,&=\,u_\eps^{0},\quad c_\eps(0,\cdot)\,=\,
  c_\eps^{0}\qquad\text{ in }\Omega \label{eq:app2-ini}\\
  \nabla u_\eps\cdot \nu_\Omega\,&=\,0,\quad D(u_\eps)\nabla
  c_\eps\cdot\nu_\Omega\,=\, 0 \qquad\text{ on
  }(0,T)\times\partial\Omega. \label{eq:app2-bdry}
\end{align}
We choose in the following $W$ to be the standard quartic double well potential,
assume that $f$ is linear and consider the convergence as $\eps\to
0$ of \eqref{eq:1app2} only.
\begin{proposition}\label{prop:app2}
Let a sequence $(u_\eps,c_\eps)_{\eps>0}$ of solutions
of \eqref{eq:1app2}-\eqref{eq:app2-bdry} be given, let $f(r)=r$, and
assume that the initial data satisfy
\begin{gather}
  \F_\eps(u_\eps^{0},c_\eps^0) \,\leq\, \Lambda_0 \label{eq:ini-app2}
\end{gather}
for all $\eps>0$. Then there exists a
subsequence $\eps\to 0$, 
a phase indicator function $u\in BV(\Omega_T)\cap
L^\infty(0,T;BV(\Omega;\{-1,1\})$, a $L^2$-flow $(\mu^t)_{t\in
(0,T)}$, and a function $c\in L^2(\mu;\Rn)$ such that the conclusions of
Theorem \ref{the:main} hold. In particular we obtain that for all
$\eta\in C^0_c(\Omega_T)$  
\begin{gather}
  \lim_{\eps\to 0}\int_{\Omega_T} \eta\cdot\nabla u_\eps
  c_\eps\,dx\,dt\,=\, \int_{\Omega_T} \eta\cdot c\,d\mu
  \label{eq:conv-app2-c}
\end{gather}
and that
\begin{gather}
  H \,=\, v + c\label{eq:lim-AC-app2}
\end{gather}
holds $\mu$-almost everywhere.
\end{proposition}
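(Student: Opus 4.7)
The plan is to verify the two hypotheses of Theorem~\ref{the:main} for the perturbation $g_\eps := f(c_\eps) = c_\eps$, under which \eqref{eq:1app2} becomes exactly an instance of \eqref{eq:p-AC}, and then simply translate the conclusions.

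First I would verify \eqref{eq:ass-ini}. From the assumed bound $\F_\eps(u_\eps^0,c_\eps^0)\le\Lambda_0$ and Young's inequality $|(u+1)c|\le\tfrac{c^2}{4\eps}+\eps(u+1)^2$, together with the standard $L^\infty$ bound on well-prepared initial data, one extracts $\mu_\eps^0(\Omega)=E_\eps(u_\eps^0)\le C(\Lambda_0)$.

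The main step, and the main obstacle, is verifying \eqref{eq:ass-g}, i.e.\ a uniform bound on $\int_{\Omega_T}\eps^{-1}c_\eps^2\,dx\,dt$. For this I would exploit the formal gradient flow structure of the coupled system \eqref{eq:1app2}-\eqref{eq:2app2}: testing \eqref{eq:1app2} with $\partial_t u_\eps$ gives
\begin{equation*}
\frac{d}{dt}E_\eps(u_\eps(t,\cdot))=-\int_\Omega\eps(\partial_t u_\eps)^2\,dx+\int_\Omega c_\eps\,\partial_t u_\eps\,dx,
\end{equation*}
and testing \eqref{eq:2app2} with the chemical potential $\mu^c_\eps:=c_\eps/\eps+(u_\eps+1)$ yields, using \eqref{eq:app2-bdry},
\begin{equation*}
\frac{d}{dt}\Bigl[\tfrac{1}{2\eps}\|c_\eps\|_{L^2(\Omega)}^2+\!\int_\Omega(u_\eps+1)c_\eps\,dx\Bigr]=\int_\Omega c_\eps\,\partial_t u_\eps\,dx-\int_\Omega D(u_\eps)|\nabla\mu^c_\eps|^2\,dx.
\end{equation*}
Combining these two identities, applying Young's inequality to the resulting cross term $\int c_\eps\partial_tu_\eps$, and invoking an a priori $L^\infty$ bound on $u_\eps$ (from the maximum principle applied to \eqref{eq:1app2}, using the non-negativity and boundedness of $c_\eps$ established in \cite{FiCE01,DeEl01,GaNS07}), one obtains a differential inequality on $\F_\eps(u_\eps(t),c_\eps(t))$ that closes by Gronwall's lemma. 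This produces
\begin{equation*}
\sup_{0\le t\le T}\F_\eps(u_\eps(t,\cdot),c_\eps(t,\cdot))\le C(\Lambda_0,T),\qquad\int_0^T\!\!\int_\Omega\eps(\partial_t u_\eps)^2\,dx\,dt\le C.
\end{equation*}
Coercivity of $\F_\eps$, again via Young's inequality on the cross term together with the $L^\infty$ bound on $u_\eps$, then gives $\sup_{0\le t\le T}\|c_\eps(t,\cdot)\|_{L^2(\Omega)}^2\le C\eps$, and integration in $t$ yields the required $\int_{\Omega_T}\eps^{-1}c_\eps^2\,dx\,dt\le CT$.

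With both hypotheses verified, Theorem~\ref{the:main} applies directly and produces the subsequence $\eps\to 0$, the phase indicator $u$, the $L^2$-flow $(c_0^{-1}\mu^t)_{t\in(0,T)}$, and a field $g\in L^2(\mu;\R^n)$ satisfying \eqref{eq:conv-g} and $v=H+g$ $\mu$-a.e. To finish, set $c:=-g\in L^2(\mu;\R^n)$; then \eqref{eq:conv-g} applied to $g_\eps=c_\eps$ becomes precisely \eqref{eq:conv-app2-c}, and the motion law $v=H+g$ rewrites as $H=v+c$, which is \eqref{eq:lim-AC-app2}. The delicate point of the entire argument is the Gronwall/absorption step producing the uniform $L^1$-in-time bound on $\eps^{-1}c_\eps^2$; once this is in hand, the rest is essentially bookkeeping around Theorem~\ref{the:main}.
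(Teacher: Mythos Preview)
Your argument is correct, but it works harder than necessary and misses the structural shortcut the paper takes. The system \eqref{eq:1app2}--\eqref{eq:2app2} is, by design, a gradient flow for $\F_\eps$; consequently $t\mapsto\F_\eps(u_\eps(t,\cdot),c_\eps(t,\cdot))$ is nonincreasing. The paper simply combines this monotonicity with a pointwise coercivity estimate (Young's inequality applied to the cross term $c_\eps(1+u_\eps)$ in the integrand of $\F_\eps$) to obtain
\[
\tfrac{1}{2}E_\eps(u_\eps(t,\cdot))+\tfrac{1}{4\eps}\|c_\eps(t,\cdot)\|_{L^2}^2\;\le\;\F_\eps(u_\eps(t,\cdot),c_\eps(t,\cdot))+\LL^n(\Omega)\;\le\;\Lambda_0+\LL^n(\Omega)
\]
for all $t$, which immediately yields both \eqref{eq:ass-ini} and \eqref{eq:ass-g} without any Gronwall step. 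Your two separate energy identities, when added, \emph{are} the dissipation identity for $\F_\eps$; in the correctly oriented gradient flow the cross terms $\int c_\eps\partial_t u_\eps$ cancel exactly rather than leaving a residual to absorb. So the step you flag as ``delicate'' is in fact empty once one recognises the Lyapunov structure. Your route has the minor advantage of being robust to sign conventions in \eqref{eq:1app2} (with the sign as literally printed, a residual does appear and your Gronwall argument handles it), but the paper's argument is the cleaner one. The identification $c:=-g$ and the translation of \eqref{eq:conv-g}, \eqref{eq:lim-AC} into \eqref{eq:conv-app2-c}, \eqref{eq:lim-AC-app2} at the end are exactly right.
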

\begin{proof}
Let us firstly notice that for every  for every $\eps$ small enough
\begin{gather*}
  \frac{1}{2\eps}W(u_\eps)+\frac{1}{4\eps}c_\eps^2\,
  \leq\, 1 + \frac{1}{\eps}W(u_\eps)+ \frac{1}{2\eps}c_\eps^2 +\, c_\eps
  (1+u_\eps)
\end{gather*}
Since $t\mapsto
\F_\eps(u_\eps(t,\cdot),c_\eps(t,\cdot))$ is nonincreasing
under the gradient flow dynamics we obtain from the above inequality
and \eqref{eq:ini-app2} that
\begin{gather*}
  \frac{1}{2} E_\eps(u_\eps(t,\cdot)) + \frac{1}{4}\int_\Omega
  \frac{1}{\eps}c_\eps^2(t,x)\,dx \, \leq \LL^n(\Omega) +
  \mathcal\F_\eps(u_\eps(t,\cdot),c_\eps(t,\cdot)) \,\leq\,
  \Lambda_0+\LL^n(\Omega). 
\end{gather*}
This latter inequality furnishes the uniform bound needed
to apply Theorem \ref{the:main}, and the conclusions follow.
\end{proof}
\begin{remark}
We do not address here the questions of convergence in
\eqref{eq:2app2} and of the right choice for the mobility function $D$
in the case of a quartic double-well potential.
We expect that taking \eqref{eq:2app2} into account will improve the
convergence of $c_\eps$ and in \eqref{eq:1app2} and will allow to prove
\eqref{eq:2app2} for more general Lipschitz functions $f$.
\end{remark}
\def\cprime{$'$}

\end{document}